\documentclass[12pt,twoside,leqno]{article}
\usepackage{amsmath}
\usepackage{amssymb}
\usepackage{amsxtra}
\usepackage{amscd}
\usepackage{amsthm}
\usepackage[mathscr]{eucal}
\usepackage{xr}
\usepackage{color}

\swapnumbers

\theoremstyle{plain}
\newtheorem{thm}[subsection]{Theorem}
\newtheorem{prop}[subsection]{Proposition}

\theoremstyle{definition}

\newtheorem{rem}[subsection]{Remark}
\newtheorem{para}[subsection]{}
\newenvironment{pf}{\proof[\proofname]}{\endproof}
\newenvironment{pf*}[1]{\proof[#1]}{\endproof}

\newcommand{\Z}{{\mathbb{Z}}}

\newcommand{\Spec}{\operatorname{Spec}}

\newcommand{\fs}{{\rm fs \ }}

\newcommand{\Gm}{\mathbb{G}_m} %
\newcommand{\Gmlog}{\mathbb{G}_{m,\log}}

\newcommand{\Ext}{\operatorname{Ext}}

\renewcommand{\bar}[1]{\overline{#1}}
\def \et {\mathrm {\acute{e}t}}

\newcommand{\cl}{Claim}
\theoremstyle{definition}

\newtheorem*{clm*}{\cl}

\theoremstyle{plain}

\def \fs {\mathrm {fs}}
\def \ket {\mathrm {k\acute{e}t}}
\def \kfl {\mathrm {kfl}}
\newcommand{\gr}{\mathrm {gr}}

\def \overc#1{\overset {\lower 0.3ex \hbox{${\;}_{\circ}$}}{#1}}

\let\refsave=\ref
\def\ref#1{\textup{\refsave{#1}}}

\newcommand{\upc}{\overset{\circ}\to}
\newcommand\Cal{\mathcal}
\newcommand\define{\newcommand}
\renewcommand\bold{\Bbb}

\define\bZ{\bold Z}

\define\bQ{\bold Q}

\define{\cS}{\Cal S}
\define{\Lie}{\mathrm{Lie}\,} %
\define{\coLie}{\mathrm{coLie}\,} %
\define{\cH}{\Cal H}
\define{\cExt}{{\Cal E}xt}
\define{\cHom}{{\Cal H}om}
\define{\cO}{\Cal O}
\define{\an}{\mathrm{an}}

\def \upcf {\overset {\lower 0.3ex \hbox{${\;}_{\circ}$}} f}
\def \upcp {\overset {\lower 0.3ex \hbox{${\;}_{\circ}$}} p}
\def \upc#1{\overset {\lower 0.3ex \hbox{${\;}_{\circ}$}}{#1}}

\newcommand{\Sig}{\Sigma}

\begin{document}
\title
{Duals of log abelian varieties}
\author{Takeshi Kajiwara, Kazuya Kato, and Chikara Nakayama}
\date{}
\maketitle
\setlength{\baselineskip}{1.0\baselineskip}
\begin{abstract}
  We prove the existence of the dual of a polarizable log abelian variety.
\end{abstract}

\section*{Contents}

\noindent \S \ref{s:A[n]}. Torsion points of weak log abelian varieties

\noindent \S \ref{s:A/H}. The Quotient of a log abelian variety by a finite group scheme

\noindent \S \ref{s:dual}. Dual is a log abelian variety

\noindent \S \ref{s:duality}. $A^{**}=A$. 

\medskip

\renewcommand{\thefootnote}{\fnsymbol{footnote}}
\footnote[0]{Primary 14K10; 
Secondary 14J10, 14D06} 

\section*{Introduction}
  It is well-known that the dual of an abelian scheme always exists as an abelian scheme. 
  By analogy, we expect that the dual of a log abelian variety always exists as a log abelian variety.  
  But it has been known only in the constant degeneration case so far. 
  In this article, we prove it for a polarizable log abelian variety.
  The assumption of the polarizability is essential to our proof, but the result is enough for most applications (f.e., moduli problems), where we only treat polarizable ones. 
  
  In Section \ref{s:A[n]}, as a preparation, we discuss the structure of the torsion points of log abelian varieties. 
  In Section \ref{s:A/H}, we prove a general theorem which says that 
the quotient of a log abelian variety by a finite log flat subgroup log scheme is a log abelian variety.  
  In Section \ref{s:dual}, applying this theorem, we prove that the existence of the dual.
  In Section \ref{s:duality}, we prove that the dual of the dual coincides with the original one. 

\smallskip

\noindent {\sc Acknowledgments.} 
The first author is partially supported by JSPS, Kakenhi (C) 20K03555 and (C) 25K06951. 
The second author is partially  supported by NFS grants DMS 1601861 and DMS 2001182.
The third author is partially supported by JSPS, Kakenhi (C) 21K03199 and (C) 26K06723.

\section{Torsion points of weak log abelian varieties}
\label{s:A[n]}
  In this section, we discuss the structure of the torsion points of weak log abelian varieties. 
  See \cite{KKN4} Definition 1.6 for the definition of weak log abelian varieties. 
  Though we use the results in this section only for log abelian varieties later, we discuss them here for weak log abelian varieties because the proofs are the same. 

  First we note the following fact. 

\begin{prop}
\label{p:kfl}
  A weak log abelian variety over an fs log scheme is a kfl 
(= kummer log flat) 
sheaf.
\end{prop}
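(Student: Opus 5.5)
By \cite{KKN4} Definition 1.6, a weak log abelian variety $A$ over an fs log scheme $S$ is a sheaf of abelian groups on the big fs étale site of $S$ satisfying certain conditions. The key one for us is that $A$ sits in an exact sequence
$$0 \to G \to A \to \mathcal{H}om(X, \Gmlog/\Gm)^{(Y)}/\bar{Y} \to 0.$$
Here $G$ is a semiabelian scheme, $X$ and $Y$ are étale locally constant sheaves of finitely generated free abelian groups, and $\mathcal{H}om(X,\Gmlog/\Gm)^{(Y)}$ is the subsheaf cut out by the pairing with $Y$. Working étale locally on $S$, I may assume $X$ and $Y$ are constant.

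Being a kfl sheaf is a sheaf-type property. So the natural plan is to show that the two outer terms of this sequence are kfl sheaves, and then deduce it for $A$ by a five-lemma type argument.

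\textbf{Step 1: the subobject $G$.} $G$ is a scheme, flat over $S$. By the standard fact (Kato, and \cite{KKN4}'s earlier papers) that representable functors of fs log schemes with the pulled-back log structure are kfl sheaves, $G$ is a kfl sheaf.

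\textbf{Step 2: the quotient.} For $\Gmlog/\Gm$, I use the known result that $\Gmlog$ is a kfl sheaf (Kato's log flat descent theorem) and that $R^1\varepsilon_*\Gm = 0$ for the map from the kfl site to the étale site on the relevant objects. More precisely, I use the known computation: the kfl sheafification of $\Gmlog/\Gm$ agrees with the étale one, because $\Gmlog/\Gm$ is already a kfl sheaf. This is also in \cite{KKN4} or the earlier work of Kato on log flat topology. From this, $\mathcal{H}om(X,\Gmlog/\Gm)$ is a kfl sheaf, and so is its subsheaf $\mathcal{H}om(X,\Gmlog/\Gm)^{(Y)}$. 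The latter is defined by local conditions on the values, and those conditions are kfl local because they can be checked on the ordered groups $\bar{M}^{\gr}$ after kummer covers.

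\textbf{Step 3: the quotient by $\bar{Y}$.} I need the quotient $\mathcal{H}om(X,\Gmlog/\Gm)^{(Y)}/\bar{Y}$, taken on the étale site, to be a kfl sheaf. The plan is to use the fact that $Y$ acts on $\mathcal{H}om(X,\Gmlog/\Gm)^{(Y)}$ in a locally discontinuous way, using the $(Y)$-condition. Kfl locally, a section lands in a fundamental domain up to finitely many $Y$-translates. Then the étale quotient already satisfies kfl descent, since $H^1_{\kfl}$ of $Y$ agrees with the étale one for a torsion-free constant group. This last point holds because $H^1(-,\Z)$ vanishes in both topologies on strictly local pieces, via the normality of fs log schemes and the fact that kummer covers have no nontrivial $\Z$-torsors.

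\textbf{Step 4: deduce for $A$.} Given a kfl covering $T'\to T$ and compatible sections of $A$ on $T'$, I first map them to the quotient and descend there by Step 3. I then lift étale locally on $T$, which is possible because the sequence is exact on the étale site. The difference with the given sections lies in $G(T')$ and satisfies the descent datum, so it descends by Step 1. Injectivity on sections follows in the same way.

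The main obstacle is Step 3, i.e., comparing étale and kfl quotients by $Y$, together with the assertion that $\Gmlog/\Gm$ is a kfl sheaf. I would try to reduce both to the vanishing $H^1_{\kfl}(T,\Z)=H^1_{\et}(T,\Z)$ and to Kato's theorem that $\varepsilon_*$ of $\Gmlog$ on the kfl site is $\Gmlog$.
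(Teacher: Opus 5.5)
Your Step 1 is fine, and so is Step 4: an étale-exact extension of two kfl sheaves is a kfl sheaf, by exactly your lift-and-correct argument. The gap is that the input Step 4 needs is false. The étale quotient $\Gmlog/\Gm$ is \emph{not} a kfl sheaf. Let $s$ be the standard log point over an algebraically closed field $k$, so $\bar M_s=\mathbb{N}$, and let $s'\to s$ be the kummer cover given by $\mathbb{N}\to\frac1n\mathbb{N}$ with $n$ invertible in $k$. Then $s'\times_s s'\cong\coprod_{\zeta\in\mu_n}s'$, and both projections induce the identity on $\bar M=\frac1n\mathbb{N}$. Hence $\frac1n\in\bar M^{\mathrm{gp}}_{s'}$ is a descent datum, but it does not come from $\bar M^{\mathrm{gp}}_s=\Z$.

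Sheaf-theoretically, write $\varepsilon$ for the morphism from the kfl site to the étale site. $\Gmlog$ is a kfl sheaf, but its quotient by $\Gm$ is not, because $R^1\varepsilon_*\Gm\cong(\Gmlog/\Gm)\otimes\mathbb{Q}/\Z\neq0$ (Kato). So Step 2 is wrong. Step 3 is also not repairable, because $Q=\cHom(X,\Gmlog/\Gm)^{(Y)}/\bar Y$ is genuinely not a kfl sheaf. Take the Tate curve over $s$, with $X=Y=\Z$ and $\langle 1,1\rangle=q$ the generator of $\bar M_s$. Then $Q(s)=0$, yet every element of $\frac1n\Z/\Z\subset Q(s')$ is a descent datum.

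So $A$ is a kfl sheaf even though $Q$ is not. What saves $A$ is that any lift of such a datum to $A(s')$, for example $q^{1/n}$, has two pullbacks to the $\zeta$-component of $s'\times_s s'$ that differ by $\zeta\in\mu_n\subset\Gm=G$. Hence the lift does not glue. In sheaf terms you have an exact sequence
$$0\to G\to\varepsilon_*\varepsilon^*A\to\varepsilon_*\varepsilon^*Q\xrightarrow{\delta}R^1\varepsilon_*G.$$
The missing idea is to prove $\ker\delta=Q$. Concretely, the connecting map must be injective on the extra sections of $\varepsilon_*\varepsilon^*Q$ modulo $Q$. Here $R^1\varepsilon_*G$ is described by Kato's theorem; the torus part contributes $\cHom(X,\Gmlog/\Gm)\otimes\mathbb{Q}/\Z$. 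Showing this injectivity is the real content of the statement, and your proposal does not address it.

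The paper does not carry out this analysis itself. It quotes Zhao's Theorem 1.1 in \cite{Z}, which settles the locally noetherian case, and reduces the general case to it by \cite{KKN4} Proposition 9.2 (2).
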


\begin{pf}  Theorem 1.1 of \cite{Z} is the case where the base is locally noetherian, to which the general case is easily reduced by \cite{KKN4} Proposition 9.2 (2).
\end{pf}

  In the following, the filtration $(W_w)_{w \in \bZ}$ of an object, the quotients $W_w/W_{w'}$ ($w'\leq w$), and exact sequences  are considered in the category of sheaves of 
abelian groups on the kfl site.

\begin{para}\label{W1} In \ref{W1} and \ref{W2}, let $A$ be a weak log abelian variety with constant degeneration. Let $n\geq 1$.
  Let $G$ be the semiabelian part of $A$ whose torus part is denoted by $T$. 
  Let $B=G/T$ be the quotient abelian scheme.
  Let $X=\cHom(T,\Gm)$ and $X \times Y \to \Gmlog/\Gm$ the induced admissible pairing.  

  Then we have the following  increasing filtration $W$ on $A[n]$ by finite kfl subgroup fs log schemes 
(this means subgroup sheaves represented by a finite kfl fs log scheme)
called the {\it weight filtration}. 

$W_w=A[n]$ for $w\geq 0$; $W_{-1}=G[n]$; $W_{-2}= T[n]$; $W_w=0$ for $w\leq -3$.

We have 
$$\gr^W_0=Y/nY, \quad \gr^W_{-1}=B[n], \quad \gr^W_{-2}=T[n].$$

\end{para}

\begin{para}\label{W2} Let $A^*$ be the dual weak log abelian variety of $A$ which we defined in the case of constant degeneration in \cite{KKN5} 1.2. 
Then $A^*[n]$ is the Cartier dual of $A[n]$. 
  Furthermore, we have: 

For $w'\leq w$,  $W_w/W_{w'}$ of $A[n]$ and  $W_{-3-w'}/W_{-3-w} $ of $A^*[n]$ are the Cartier duals of each other. 

In particular, $A[n]/W_{-2}$ is the Cartier dual of $G^*[n]$ and hence is classical finite flat, where $G^*$ is the semiabelian part of $A^*$. 
\end{para}

\begin{para}\label{cat}
  We review the definition of some categories of sheaves of abelian groups introduced in \cite{Kato:logD}. 

  For an fs log scheme $S$, 
  let $(\mathrm{fin}/S)_{\mathrm r}$ be the category of sheaves of abelian groups on the kfl site of $S$ 
represented by a finite kfl group fs log scheme over $S$.
  Let $(\mathrm{fin}/S)_{\mathrm d}$ be the full subcategory of $(\mathrm{fin}/S)_{\mathrm r}$ consisting of all objects whose Cartier dual also belongs to $(\mathrm{fin}/S)_{\mathrm r}$.
  Let $(\mathrm{fin}/S)_{\mathrm c}$ be the full subcategory of $(\mathrm{fin}/S)_{\mathrm r}$ consisting of all objects which is classical finite flat over $S$. 
  Let $(\mathrm{fin}/S)_{\mathrm e}$ (resp.\ $(\mathrm{fin}/S)_{\mathrm f}$) be the category of sheaves of abelian groups on the kfl site of $S$ which belongs to 
$(\mathrm{fin})_{\mathrm r}$ k\'et (= kummer log \'etale) (resp.\ kfl) locally. 
  We have inclusions
$$(\mathrm{fin}/S)_{\mathrm c} \subset (\mathrm{fin}/S)_{\mathrm d}\subset (\mathrm{fin}/S)_{\mathrm r}\subset (\mathrm{fin}/S)_{\mathrm e}\subset (\mathrm{fin}/S)_{\mathrm f}.$$

  Concerning these categories, the following facts are basic. 

(1) An object of $(\mathrm{fin}/S)_{\mathrm f}$ killed by an integer which is invertible on $S$ belongs to $(\mathrm{fin}/S)_{\mathrm d}$. (\cite{Kato:logD} Proposition 2.1.)

(2) Let $*$ be one of d, r, e, and f. 
  Let $0 \to H' \to H \to H'' \to 0$ be an exact sequence of sheaves of abelian groups on the kfl site of $S$. 
  If $H'$ and $H''$ belong to $(\mathrm{fin}/S)_{*}$, then $H$ also belongs to $(\mathrm{fin}/S)_{*}$. (\cite{Kato:logD} Proposition 2.3.)

(3) Assume that the underlying scheme of $S$ is noetherian strictly local. 
  Let $H \in (\mathrm{fin}/S)_{\mathrm f}$. 
  Then there is a unique exact sequence $0\to H^{\text{con}} \to H \to H^{\mathrm{\acute et}}\to 0$ 
of objects of $(\mathrm{fin}/S)_{\mathrm f}$ such that kfl locally the following holds: Both 
$H^{\text{con}}$ and $H^{\mathrm{\acute et}}$ belong to $(\mathrm{fin}/S)_{\mathrm c}$, $H^{\text{con}}$ is connected, and $H^{\mathrm{\acute et}}$ is \'etale. 
(\cite{Kato:logD} Proposition 2.6.)
  Furthermore, $H$ belongs to $(\mathrm{fin}/S)_{\mathrm r}$ if and only if 
$H^{\text{con}}$ belongs to $(\mathrm{fin}/S)_{\mathrm c}$. 
(\cite{Kato:logD} Proposition 2.7 (2).)

(4) Let $H \in (\mathrm{fin}/S)_{\mathrm f}$. 
  Let $*$ be one of c, d, r, and e.
  Then $H$ belongs to $(\mathrm{fin}/S)_{*}$ if the pullback of $H$ to any closed point of $S$ belongs to $(\mathrm{fin})_{*}$.
(\cite{Kato:logD} Proposition 2.16.)
\end{para}

 \begin{prop}\label{W4} 
Let $A$ be a weak log abelian variety over an fs log scheme $S$. 
Let $H$ be a subgroup sheaf of $A[n]$ and assume that $H$ belongs to $(\mathrm{fin}/S)_{\mathrm r}$. 

$(1)$ $H$ belongs to $(\mathrm{fin}/S)_{\mathrm d}$. 

$(2)$  Let $s\in S$. \'Etale locally on $S$ around $\bar s$, $H$ has an increasing filtration $(W_wH)_w$ by subobjects in $(\mathrm{fin}/S)_{\mathrm d}$
having the following properties {\rm (i)} and {\rm (ii)}.

{\rm (i)} $W_0H=H$. $W_{-3}H=0$.  $W_{-1}H$ and $H/W_{-2}H$, and $\gr^W_wH$ for all $w\in \Z$ belong to $(\mathrm{fin}/S)_{\mathrm c}$.

{\rm (ii)} For every $r\geq 1$, if $m_{\bar s}$ denotes the maximal ideal of the strict localization $\cO_{S,\bar s}$ of $\cO_{S,s}$, the pullback of $W_wH$ on $\Spec(\cO_{S, \bar s}/m_{\bar s}^r)$ is the intersection of $H$ and $W_wA[n]$, where $W_wA[n]$ is the $W_w$ in \ref{W1}. 

$(3)$ Let $s\in S$ and let $X$ and $Y$ be abelian groups such that $X\to \bar X_{\bar s}$ and $Y\to \bar Y_{\bar s}$ are isomorphisms. 
Then \'etale locally on $S$ around $\bar s$, we have $\gr^W_0A[n]=Y/nY$ and $\gr^W_{-2}A[n]=\cH om(X/nX, \Z/n\Z(1))$.

\end{prop}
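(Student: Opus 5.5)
The plan is to reduce to the constant-degeneration situation of \ref{W1} and \ref{W2}. Then use the criteria of \ref{cat} (4), which checks membership at closed points, together with \ref{cat} (2) and (3).

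For (3), I work étale locally around $\bar s$. There the weak log abelian variety has a semiabelian part $G$ with torus part $T$. I fix lifts of $X\cong \bar X_{\bar s}$ and $Y\cong\bar Y_{\bar s}$ to constant sheaves, shrinking the étale neighborhood so that $T$ is split with character group $X$ and the lattice is $Y$. Then $\gr^W_{-2}A[n]=T[n]=\cHom(X,\Z/n\Z(1))=\cHom(X/nX,\Z/n\Z(1))$. The identification $\gr^W_0A[n]=A[n]/G[n]=Y/nY$ comes from the exact sequence $0\to G\to A\to \cH om(X,\Gmlog/\Gm)^{(Y)}/\bar Y$ (KKN4), restricted to $n$-torsion. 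So (3) is essentially formal once the filtration $W$ on $A[n]$ is defined near $\bar s$ by $W_{-1}=G[n]$ and $W_{-2}=T[n]$. This filtration is defined over the whole étale neighborhood, not only over the points.

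For (2), I want to define $W_wH$ étale locally by $W_wH:=H\cap W_wA[n]$. The difficulty is that this intersection need not be finite flat in general, because the torus/abelian decomposition of $A[n]$ can jump under specialization. So I first pass to $\Spec(\cO_{S,\bar s}/m_{\bar s}^r)$. There the degeneration is constant in the sense needed: the log structure pulls back from $\bar s$ up to nilpotents, and $W$ on $A[n]$ is a filtration by objects of $(\mathrm{fin})_{\mathrm r}$ with graded pieces $T[n]$, $B[n]$, $Y/nY$. On this infinitesimal thickening, I use \ref{cat} (3), the connected–étale sequence, to see that $H\cap W_{-1}A[n]=H\cap G[n]$ is classical finite flat. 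This is a closed subgroup of the classical group $G[n]$ that is finite kfl, hence classical. Likewise $H/(H\cap T[n])$ embeds into $A[n]/T[n]$, which is classical by \ref{W2}. The graded pieces embed in $T[n]$, $B[n]$ and $Y/nY$, which are classical.

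I then lift from the thickenings to an étale neighborhood. The finite étale part ($\gr_0$, inside $Y/nY$) and the multiplicative part ($\gr_{-2}$, inside $T[n]$) deform uniquely. So I define $W_{-2}H$ as the part of $H$ of multiplicative type lying over $T[n]$, and $W_{-1}H$ as the kernel of $H\to Y/nY$, the image of $H$ in the étale quotient. These definitions make sense over the henselization and then over an étale neighborhood, and their restrictions to each thickening agree with the intersections, which gives (ii). Membership of each piece in $(\mathrm{fin})_{\mathrm c}$ or $(\mathrm{fin})_{\mathrm d}$ follows from \ref{cat} (4), checked at closed points where we are in the constant case. The $(\mathrm{fin})_{\mathrm d}$ property of the extensions then follows from \ref{cat} (2). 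This lifting step, especially controlling $W_{-1}H$ away from the closed point where degeneration is non-constant, is the main obstacle. I would handle it by reducing to $S$ noetherian via KKN4 Proposition 9.2 (2), then using (4) pointwise.

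(1) then follows from (2) and \ref{cat} (2), since being in $(\mathrm{fin})_{\mathrm d}$ is étale local and $H$ is an iterated extension of classical pieces, each of which lies in $(\mathrm{fin})_{\mathrm c}\subset(\mathrm{fin})_{\mathrm d}$. Alternatively, one checks at closed points using \ref{cat} (4) and the Cartier duality of \ref{W2}.
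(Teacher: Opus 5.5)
\textbf{There is a genuine gap, at the first step: building the filtration on $A[n]$ itself near $\bar s$.}

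When the degeneration is not constant, the semiabelian part $G$ has jumping torus rank. This has three consequences:
\begin{itemize}
\item There is no torus $T$ with character group $X$ over any \'etale neighborhood of $\bar s$.
\item $G[n]$ is only quasi-finite; the paper notes explicitly that $G[n]$ need not be finite.
\item The exact sequence $0\to G\to A\to\cHom(\bar X,\Gmlog/\Gm)^{(\bar Y)}/\bar Y\to 0$ involves $\bar X,\bar Y$. These are quotients of $X,Y$ that shrink under generization, so $A[n]/G[n]$ is not $Y/nY$.
\end{itemize}
For example, take the log Tate curve over a complete discrete valuation ring. Here $G$ has the elliptic curve as generic fiber and $\Gm$ as closed fiber. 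So $G[n]$ has order $n^2$ generically and $n$ at the closed point, and $A[n]/G[n]$ vanishes generically. The correct filtration instead has $W_{-1}A[n]=W_{-2}A[n]=\mu_n$ and $\gr^W_0A[n]=\Z/n\Z$.

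Hence your argument for (3) fails. The objects your construction in (2) depends on are also undefined: the map $H\to Y/nY$, and ``the multiplicative part of $H$ over $T[n]$''. Your remark that \'etale and multiplicative parts ``deform uniquely'' is not carried out. Nor can \ref{cat} (4) substitute for it: it tests whether an already given object of $(\mathrm{fin}/S)_{\mathrm f}$ lies in a smaller category, but it does not produce subobjects.

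The missing idea is the paper's:
\begin{itemize}
\item Reduce to $S$ of finite type over $\Z$, and then to the strict localization $\Spec R$ at $s$.
\item Over each $R/m^r$ the degeneration is constant, so \ref{W1} and \ref{W2} give filtrations on $A[n]$ there. Over $\hat R$ these form a compatible system of finite objects, which algebraizes by GAGF for finite objects.
\item Artin approximation brings the filtration down to $R$, and then to an \'etale neighborhood.
\end{itemize}
Only after this does one set $W_wH:=H\cap W_wA[n]$, and then (ii) holds by construction.

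\textbf{A second, smaller gap: classicality of $H\cap G[n]$.} Your step ``$H\cap G[n]$ is finite kfl and lies in a classical group, hence is classical'' needs an argument, and the natural one creates a circularity.
\begin{itemize}
\item Kflness is fine once the correct $W$ exists: $W_{-1}H$ is the kernel of $H\to\gr^W_0A[n]$, whose target is \'etale.
\item For classicality the paper uses Theorem 3.1 of \cite{Kato:logD}: an object $J$ of $(\mathrm{fin}/S)_{\mathrm d}$ is classical if and only if $J^{\mathrm{\acute et}}(1)\to J^{\mathrm{mult}}\otimes\Gmlog/\Gm$ is trivial. For $A[n]$ this map factors through $\gr^W_0A[n]=Y/nY$, so it is trivial on $W_{-1}H$.
\end{itemize}
That criterion needs $W_{-1}H\in(\mathrm{fin}/S)_{\mathrm d}$ as input. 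So if you justify your step this way, you cannot afterwards deduce (1) from (2) as you propose.

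The paper avoids this by proving (1) first and directly:
\begin{itemize}
\item For $n$ invertible, (1) is \ref{cat} (1).
\item For $n$ a power of $p$, $H^{\mathrm{con}}\in(\mathrm{fin}/S)_{\mathrm c}$ by \ref{cat} (3). Also $H^{\mathrm{\acute et}}$ injects into $A[n]^{\mathrm{\acute et}}$, which is a quotient of the classical $A[n]/W_{-2}A[n]$ and hence \'etale. Then \ref{cat} (2) applies.
\end{itemize}
The same criterion then gives $H/W_{-2}H\in(\mathrm{fin}/S)_{\mathrm c}$. Finally, $W_{-2}H$ is classical as the kernel of $H^{\mathrm{mult}}\to A[n]^{\mathrm{mult}}/W_{-2}A[n]$.
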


 \begin{rem}
\label{r:A[n]} 
The fact that $A[n]$ belongs to $(\mathrm{fin}/S)_{\mathrm r}$ is proved in \cite{KKN4} Proposition 18.1. 

By using it, the fact that $A[n]$ belongs to $(\mathrm{fin}/S)_{\mathrm d}$ (that is, the 
 case $H=A[n]$ of the above (1)) is proved in \cite{Kato:logD} Theorem 4.5 (1) for log abelian varieties. The proof there works for weak log abelian varieties, and is  essentially contained in the proof  below. 

\end{rem}

\begin{para} We prove Proposition \ref{W4}. 
  First, by \cite{KKN4} Proposition 9.2 (2), 
we can replace the base by an fs log scheme of finite type over $\bZ$. 
  Next, let $s \in S$ as in (2) and (3), and we replace 
the base by its strict localization $\Spec(R)$ at $s$. 
  Third, we may assume that either $n$ is invertible on the base or the residue characteristic is $p>0$ and $n$ is a power of $p$.

  We prove the case  $H=A[n]$. 
  If $R$ is a (noetherian) complete local ring, (1)--(3) for $H=A[n]$ are obtained from \ref{W1} and \ref{W2} by GAGF for finite objects.
  Then by Artin's approximation, a general case follows. 

  We consider a general $H$. 
  Let $W_w(H):=H \cap W_w(A[n])$. 
  Then it satisfies the condition (ii) of (2). 

  We prove (1). 
  If $n$ is invertible on $S$, it is  by \ref{cat} (1).
  Assume that $n$ is a power of $p$. 
  We have an exact sequence  $0\to H^{\text{con}} \to H \to H^{\mathrm{\acute et}}\to 0$ 
with $H^{\text{con}} \in (\mathrm{fin}/S)_{\mathrm c}$ 
as in \ref{cat} (3).
  Since $n$ is a power of $p$ now, $A[n]^{\mathrm{\acute et}}$ is a quotient of $A[n]/W_{-2}(A[n])$. 
  Hence it is \'etale. 
  Since $H^{\mathrm{\acute et}}\to A[n]^{\mathrm{\acute et}}$ is injective, it implies that $H^{\mathrm{\acute et}}$ is also \'etale. 
  Since $H^{\text{con}}$ and $H^{\mathrm{\acute et}}$ belong to $(\mathrm{fin}/S)_{\mathrm c}$, $H$ belongs to $(\mathrm{fin}/S)_{\mathrm {d}}$ by \ref{cat} (2). 

  We prove (i) in (2). 
If $n$ is invertible on $S$, it is easy to see because $A[n]$, $H$, and their filters are all k\'et locally constant.
  Assume that $n$ is a power of $p$. 
  First, since $\gr^W_0H \to \gr_0^WA[n]$ is injective 
and $\gr_0^WA[n]$ is \'etale, 
$\gr^W_0H$ belongs to $(\mathrm{fin}/S)_{\mathrm c}$.

  We prove $W_{-1}H$ belongs to $(\mathrm{fin}/S)_{\mathrm c}$. 
  Since it is the kernel of $H \to \gr_0^WA[n]$ and $\gr_0^WA[n]$ is \'etale, it belongs to $(\mathrm{fin}/S)_{\mathrm r}$. 
  Applying (1) to $W_{-1}H$, we see that it belongs to 
$(\mathrm{fin}/S)_{\mathrm d}$. 
  Now we apply \cite{Kato:logD} Theorem 3.1 which says that for an object $J$ of $(\mathrm{fin}/S)_{\mathrm{d}}$, $J$ belongs to $(\mathrm{fin}/S)_{\mathrm c}$ 
if and only if the  functorial canonical map $J^{\mathrm{\acute et}}(1)\to J^{\mathrm{mult}}\otimes 
\mathbb{G}_{m,\log}/\mathbb{G}_m$ is trivial.  
  This canonical map for $A[n]$ is $$A[n]^{\mathrm{\acute et}}(1)\to \gr^W_0A[n] = Y/nY$$ $$\to \cH om(X, \Z/n\Z(1)) \otimes \mathbb{G}_{m,\log}/\mathbb{G}_m= \gr^W_{-2}A[n] \otimes \mathbb{G}_{m,\log}/\mathbb{G}_m\to A[n]^{\mathrm{mult}} \otimes \mathbb{G}_{m,\log}/\mathbb{G}_m$$ in which the second arrow is induced by $X\times Y \to \mathbb{G}_{m,\log}/\mathbb{G}_m$. 
  Hence the canonical map for $W_{-1}H$ is trivial so that $W_{-1}H$ belongs to $(\mathrm{fin}/S)_{\mathrm c}$.

 Next we prove that $\gr_{-2}H=W_{-2}H$ belongs to $(\mathrm{fin}/S)_{\mathrm c}$.
 We have $W_{-2}H \subset H^{\text{mult}}$, where mult means the multiplicative part. 
 Further, $W_{-2}H$ is the kernel of $H^{\text{mult}} \to A[n]^{\text{mult}}/W_{-2}A[n]$.
 Hence it belongs to $(\mathrm{fin}/S)_{\mathrm c}$.

 Since $\gr_{-1}H = W_{-1}H/W_{-2}H$, it also belongs to $(\mathrm{fin}/S)_{\mathrm c}$.

 Finally, $\gr_{0}H, \gr_{-1}H \in (\mathrm{fin}/S)_{\mathrm c}$ and \ref{cat} (2) imply that $H/W_{-2}H$ belongs to $(\mathrm{fin}/S)_{\mathrm d}$. 
  Again by \cite{Kato:logD} Theorem 3.1, 
$H/W_{-2}H$ belongs to $(\mathrm{fin}/S)_{\mathrm c}$. 

\end{para}

\begin{para} Let the situation be as in Proposition \ref{W4} and assume that $S$ is the spectrum of a henselian local ring. 

  Note that $G[n]$ is not necessarily finite. We have $W_{-1}A[n]\subset G[n]$. This is reduced to the case of a noetherian complete local ring, and to the formal situation by the fact that $W_{-1}A[n]$ is finite.

In the case over a henselian discrete valuation ring, the above is closely related to the semi-stable reduction case (section 3) of \cite{GR}. In \cite{GR},  $W_{-1}A[n]$ is called the fixed part, and $W_{-2}A[n]$ is called the toric part. 
\end{para}

\section{The Quotient of a log abelian variety by a finite group scheme}\label{s:A/H}
  In this section, we prove the following general theorem. 

 \begin{thm}\label{A/H} Let $A$ be a log abelian variety over an fs log scheme $S$ and let $H\subset A$ be a subgroup sheaf. 
  Assume that $H$ belongs to the category $(\mathrm{fin}/S)_{\mathrm r}$.
  Then the quotient sheaf $A/H$ as a kfl sheaf is a log abelian variety.
\end{thm}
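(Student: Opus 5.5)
Since $H$ is finite it is killed by some $n\geq 1$, so $H\subset A[n]$ and Proposition \ref{W4} applies to it. By Proposition \ref{W4} (1), $H$ lies in $(\mathrm{fin}/S)_{\mathrm d}$. The plan is to check the definition of a log abelian variety (\cite{KKN4}) for $A/H$ étale locally on $S$, after reducing to a convenient base. Being a log abelian variety is étale local on $S$ and compatible with the limit argument of \cite{KKN4} Proposition 9.2 (2). So I first reduce to $S$ of finite type over $\bZ$, and then work étale locally around a point $\bar s$. There I take the filtration $W_wH$ of Proposition \ref{W4} (2).

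Next I build the semiabelian part, the lattices and the pairing of $A/H$ from those of $A$. Let $G$ be the semiabelian part of $A$ near $\bar s$, with torus $T$, and write $X$, $Y$ for the character and period lattices. By \ref{W4} (2)(i), $W_{-1}H$ is classical finite flat, and by the remark after the proof of \ref{W4} it lies inside $G$ (at least over henselian local bases). Put $G'=G/W_{-1}H$. This is a semiabelian scheme with torus $T'=T/W_{-2}H$ and abelian part $B/\gr^W_{-1}H$. The graded piece $\gr^W_0H\subset Y/nY$ is étale. Let $Y'\supset Y$ be its preimage in $\frac1n Y$, and let $X'\subset X$ be the character group of $T'$. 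The pairing $X\times Y\to \Gmlog/\Gm$ should extend to $X'\times Y'$. I would prove this using Theorem 3.1 of \cite{Kato:logD}, i.e.\ the triviality of the canonical map for $H/W_{-2}H$ established in \ref{W4} (2)(i). The resulting pairing stays admissible because $Y'/Y$ and $X/X'$ are finite.

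With these data I then show that $A/H$ is the log abelian variety attached to $(G',X',Y',\text{pairing})$, following the structure theory of \cite{KKN4}. First, $A/H$ is a kfl sheaf extension $0\to G'\to (A/H)\to \mathcal Q'\to 0$. Here $\mathcal Q'$ is the quotient of the $\cH om(X',\Gmlog/\Gm)^{(Y')}/Y'$-type object, which is compatible with the corresponding sequence for $A$ by a diagram chase with the weight filtration. Then the conditions in the definition must be checked. The first group of conditions is local on the base: the pullback to each geometric point is a log abelian variety with constant degeneration, and there is a classical abelian/semiabelian structure; for these I use the constant degeneration case, where quotients are explicit, together with \ref{W2}. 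The second group asks that $A/H$ be a sheaf satisfying the separatedness and properness-type conditions. These should pass from $A$ to $A/H$, because $A\to A/H$ is a kfl $H$-torsor with $H$ finite kfl. I expect to use that a sheaf which becomes representable by a proper log smooth object after a finite kfl torsor behaves well, combined with Proposition \ref{p:kfl}.

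The main obstacle I expect is exactly this second group of conditions: representability of $A/H$ by log modifications (properness/separatedness) and the descent of the required local model structure, when only the kfl torsor $A\to A/H$ is known. My first attempt would be to compare $A/H$ with $A/\,W_{-1}H$ and then with the further quotient by the étale part $H/W_{-1}H$, treating these two steps separately. The quotient by a classical finite flat subgroup of $G$ should be handled by classical quotient theory applied to the local models (the $\mathcal Q$-presentation). The quotient by $\gr^W_0H$ amounts to enlarging $Y$ to $Y'$, which is visibly again of the required form.
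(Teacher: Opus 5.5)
\paragraph{Where the proposal falls short.} Your data $(G',X',Y')$ coincide with the paper's. The paper obtains them by a d\'evissage along $W$ into the three cases $H=W_{-2}H$; $H=W_{-1}H$ with $W_{-2}H=0$; and $W_{-1}H=0$. The pointwise condition is indeed reduced to fs log points, as you say. There, however, it is a published result (\cite{Z2} Proposition 3.2 (3)), not something that is simply ``explicit''.

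The genuine gap is condition \cite{KKN2} 4.1.3 of the definition: the diagonal $A/H\to A/H\times A/H$ must be representable by finite morphisms. You only state an expectation here, and the mechanism you suggest does not give it. Take $U\to A/H\times A/H$ and descend along the kfl $(H\times H)$-torsor $A\times A\to A/H\times A/H$. This only shows that the sheaf $U\times_{A/H\times A/H}A/H$ is represented by a finite fs log scheme kfl locally on $U$. A kfl sheaf with that property need not be representable; this is exactly the difference between $(\mathrm{fin}/S)_{\mathrm f}$ and $(\mathrm{fin}/S)_{\mathrm r}$ in \ref{cat}.

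Moreover, $A$ itself is in general not representable (only its proper models are). So a lemma about sheaves that become representable by a proper log smooth object after a finite kfl torsor would not even apply. Your fallback, quotienting first by $W_{-1}H$ and then by the \'etale part via local models, addresses the extension structure (4.1.2), not the diagonal.

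\paragraph{The missing idea.} Map $A/H$ back to $A$ instead of descending along $A\to A/H$.
\begin{itemize}
\item Since $nH=0$, multiplication by $n$ on $A$ induces a surjection $A'=A/H\to A$ of kfl sheaves with finite kernel $H'=A[n]/H$.
\item Given $U\to A'\times A'$, the fiber product $V=U\times_{A\times A}A$ is finite over $U$, because the diagonal of $A$ is representable by finite morphisms.
\item One has $V=U\times_{A'\times A'}(A'\times_A A')$.
\item The map $(a,b)\mapsto(a,b-a)$ gives $A'\times_A A'\cong A'\times H'$, and the diagonal of $A'$ corresponds to $A'\times\{0\}$.
\item Hence $U\times_{A'\times A'}A'$ is the inverse image of the zero section under $V\to H'$. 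This is a closed subscheme of $V$, and therefore finite over $U$.
\end{itemize}
This argument is the step your proposal lacks; without it (or an equivalent), the finiteness of the diagonal of $A/H$ remains unproved.
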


\noindent {\it Remark.}
 The case of this theorem where $S$ is an fs log point is proved in 
\cite{Z2} Proposition 3.2 (3). 

\begin{para}\label{Howto} 
  To prove the theorem, we fix a point $s$ of $S$ and consider \'etale locally on $S$ around $s$.
 
If we have an exact sequence $0\to H_1 \to H \to H_2 \to 0$ with $H_1, H_2$ being objects of $(\mathrm{fin}/S)_{\mathrm r}$, 
we are reduced to the cases $H=H_1$ and $H=H_2 \subset A/H_1$. Hence by Proposition \ref{W4},  we may assume that we are in one of the following situation (i), (ii), and (iii).
 
  (i) $H=W_{-2}H$.
 
 (ii) $H= W_{-1}H$ and $W_{-2}H=0$. 
 
 (iii) $W_{-1}H=0$.
 
  We assume that one of them holds.  
\end{para}

\begin{para}
Let $A'=A/H$ be the quotient. 
 We check that $A'$ satisfies the conditions \cite{KKN2} 4.1.1, 4.1.2, and 4.1.3 in the definition of log abelian variety.

The condition 4.1.1 
is reduced to the case of this theorem where the base is an fs log point (cf.\ the above remark after Theorem \ref{A/H}). 
\end{para}

\begin{para}
Next, we check the condition 4.1.2.
We know that $A$ satisfies this condition, that is, 
we  have an exact sequence 
$$0\to G \to A \to \cHom(\bar {X}, \Gmlog/\Gm)^{(\bar{Y})}/\bar{Y}  \to 0,$$ 
where $G$ is the semiabelian part of $A$ and $\bar X \times \bar Y \to \Gmlog/\Gm$ is the associated admissible pairing. 
  We prove that there is a similar exact sequence 
$$0\to G' \to A' \to \cHom(\bar {X'}, \Gmlog/\Gm)^{(\bar{Y'})}/\bar{Y'}  \to 0$$ by defining $G'$, $\bar{X'}\subset {\bar X}$, and $\bar{Y'}\supset \bar Y$ as follows.  

In each case in \ref{Howto}, we define $(X', Y', G')$ as follows.
Recall that we are working around $\bar s$. 
Let $X$ and $Y$ be abelian groups such that $X\to \bar X_{\bar s}$ and $Y\to \bar Y_{\bar s}$ are isomorphisms. 

 In the case (i), $X'$ is the kernel of the homomorphism $X\to \cH om(H, \mathbb{G}_m)$ induced by the embedding $H\subset \cH om(X,\mathbb{G}_m)$, $Y'=Y$, $G'=G/H$.
 
 In the case (ii), $X'=X$, $Y'=Y$, $G'=G/H$.
 
 In the case (iii), $X'=X$, $G'=G$, and $Y'=\frac{1}{n}Y''$, where $Y''\subset Y$ is the inverse image of $H$ under $Y\to Y/nY\cong A[n]$ for $n\geq 1$ which kills $H$  ($n$  can be taken locally on the base).
  
Then $G'$ is semiabelian.  

Let $\bar{X'}$ be the image of $X' \to \bar X$ and 
$\bar{Y'}$ the image of $Y' \to \bar Y_{\bQ}$.
  Then we have the desired exact sequence. 
\end{para}

\begin{para} We check the condition
4.1.3 about the diagonal morphism $A'\to A'\times A'$. Since $H$ is killed by some integer $n\geq 1$, the homomorphism $n: A \to A$ induces a surjective homomorphism $A' \to A$ of kfl sheaves with kernel $H':=A[n]/H$. 
 Note that a log abelian variety is a kfl sheaf by Proposition \ref{p:kfl}. 

Then for a morphism $U \to A'\times A'$ from an fs log scheme, 
the fiber product $V$ of $U\to A \times A \leftarrow A$ is represented by a finite fs log scheme over $U$. 
  Further, the fiber product of $V \to A'\times_A A' \leftarrow A'$ is the fiber product of $V\to H' \times A \leftarrow A$ and hence it is represented by the inverse image of $1$ under $V\to H'$, which is finite over $U$. 
  Hence the diagonal $A' \to A' \times A'$ is represented by finite morphisms. 
\end{para}

\section{Dual is a log abelian variety}
\label{s:dual}
  We recall the definition of the dual of a weak log abelian variety and prove that the dual of a polarizable log abelian variety is a log abelian variety. 

\begin{para} Let $A$ be a weak log abelian variety over an fs log scheme $S$. We define the dual $A^*$ of $A$ as a sheaf on $(\fs/S)_{\et}$ of abelian groups as follows.

  As in noted in \ref{W2}, in the case of constant degeneration, $A^*$ is already defined and is a log abelian variety. 

We define the {\it dual} $A^*$ of $A$ to be the subsheaf of $\cExt(A, \Gmlog)$ consisting of all local sections whose pullbacks to fs log points belong 
to the dual of $A$. 
(Note that $A$ becomes of constant degeneration on any fs log point.)
\end{para}

\begin{prop}\label{dual1} Let $p: A \to \cExt(A, \Gmlog)$  be a polarization. Then the image of $p$ coincides with $A^*$. 
\end{prop}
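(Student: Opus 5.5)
We have to prove two inclusions: the image of $p$ lies in $A^*$, and every local section of $A^*$ lies, étale locally, in the image of $p$. Recall that a polarization $p\colon A\to \cExt(A,\Gmlog)$ is a homomorphism whose pullback to every fs log point $t$ is a polarization of the constant-degeneration log abelian variety $A_t$. In particular it factors through $A_t^*$ there and is an isogeny $A_t\to A_t^*$ with finite kernel.

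\emph{First inclusion.} For a local section $a$ of $A$, the pullback of $p(a)$ to any fs log point $t$ equals $p_t(a_t)$. This lies in $A_t^*$ because $p_t$ takes values in the dual of $A_t$ (constant degeneration case, \cite{KKN5} 1.2). By the definition of $A^*$ we get $p(a)\in A^*$, so $p$ factors as $A\to A^*\subset \cExt(A,\Gmlog)$.

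\emph{Kernel of $p$.} Let $H=\operatorname{Ker}(p)$. I will show that $H\subset A[n]$ for some $n\geq 1$, working locally on $S$, and that $H$ belongs to $(\mathrm{fin}/S)_{\mathrm r}$. On fibers, $H_t=\operatorname{Ker}(p_t)$ is the kernel of an isogeny of constant-degeneration log abelian varieties. Its degree $d$ is locally constant on $S$: the polarization type is determined by the induced polarizations on $B$ and on $X\to Y$, and both are locally constant. So $H\subset A[d]$.

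To see that $H$ lies in $(\mathrm{fin}/S)_{\mathrm r}$, I would use \ref{cat} (4) to reduce to closed points. I would also argue representability: $H$ is the fiber of $p\colon A[d]\to \cExt(A,\Gmlog)$ over $0$, hence a closed subobject of $A[d]$, and $A[d]$ lies in $(\mathrm{fin}/S)_{\mathrm r}$ by Remark \ref{r:A[n]}. One then has to see that this closed subobject is kfl over $S$. That follows from the fiberwise flatness criterion, since fiberwise $H_t$ is the Cartier dual of the corresponding kernel on the dual side (\ref{W2}), and its order is constant.

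\emph{Surjectivity onto $A^*$.} By Theorem \ref{A/H}, $A'=A/H$ is a log abelian variety, and $p$ induces an injection $A'\hookrightarrow A^*$. It remains to show that every local section $\phi$ of $A^*$ comes, étale locally, from $A'$. I would use $d$: on fibers, $d\cdot \phi_t$ lies in the image of $p_t$, because there exists $q_t\colon A_t^*\to A_t$ with $p_tq_t=d$. I would try to globalize this by constructing $q$ directly as a homomorphism $A'\to A$ with kernel $A[d]/H$. Then $A^*/A'$ is killed by $d$, and a section $\phi$ with $d\phi=p(a)$ can be analysed via the exact sequence $0\to A'[d]\to A'\xrightarrow{d}A'\to0$. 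Since $d\phi$ and $p(a)$ agree, the difference $\phi-p(a')$, where $a'$ is a kfl-local $d$-th root of $a$ in $A'$, is a $d$-torsion section of $A^*$ lying in $\cExt(A,\Gmlog)[d]$. Fiberwise it lies in $A_t^*[d]=$ Cartier dual of $A_t[d]$, which equals the image of $A'[d]$ up to the kernel. Comparing orders of finite kfl objects fiberwise, and using \ref{cat} (4), would then give $A^*=A'$ as kfl sheaves. Finally, since both are étale sheaves and kfl-descent holds, the equality also holds on the étale site.

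The main obstacle is this last step, passing from the fiberwise equality $A'_t=A^*_t$ to equality of sheaves. I would first try the torsion comparison just sketched, identifying $A^*[d]$ with the Cartier dual of $A[d]$ inside $\cExt(A,\Gmlog)$, so that everything reduces to the finite-object statements of \S\ref{s:A[n]}.
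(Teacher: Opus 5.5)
There is a genuine gap in the surjectivity step. The assertion ``then $A^*/A'$ is killed by $d$'' is not supported by anything you construct. Your $q\colon A'\to A$ is just multiplication by $d$ descended along $A\to A/H$. It is defined only on $A'$, so it says nothing about a section $\phi$ of $A^*$ not already known to lie in $A'$. Having $q$ on all of $A^*$ would essentially require the proposition itself.

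Note also that at fs log points there is nothing to prove: $p_t\colon A_t\to A_t^*$ is already an isogeny, so $\phi_t$ itself (not only $d\phi_t$) comes from $A_t$. The entire content of the statement is the passage from fs log points to the base, and your torsion comparison cannot supply it. Grant $A'[n]\cong\cExt(A,\Gmlog)[n]$ for all $n$ (plausible, via Cartier duality and Theorem~\ref{A/H}). Even then, the snake lemma and the divisibility of $A'$ only show that $A^*/A'$ is torsion-free. A torsion-free sheaf whose pullbacks to all fs log points vanish need not vanish; what is lost is exactly the behaviour along nilpotent thickenings.

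The same defect affects your kfl-ness argument for $H$. Constancy of the order at points does not give flatness over a non-reduced base. This is why the paper uses Proposition~\ref{p:kfl_criterion} to reduce to Artinian bases, where the degeneration is constant.

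The missing ingredient is the paper's infinitesimal-plus-approximation argument, which runs as follows.
\begin{enumerate}
\item Reduce to $U$ of finite type over $\Z$ (Proposition~\ref{p:Ext}~(3)) and take the strict local ring $(R,m_R)$ at a geometric point.
\item On each $\Spec(R/m_R^{n+1})$ the degeneration is constant, so constant-degeneration theory shows that the image $b_n$ of $b$ comes from some $a_n\in A(R/m_R^{n+1})$.
\item Finiteness of $\ker p$ permits a compatible choice $(a_n)_n$.
\item GAGF for sections of $A$, together with injective GAGF for $\cExt(A,\Gmlog)$, gives $a\in A(\hat R)$ mapping to $\hat b$.
\item Artin approximation, with the local finite presentation of $\cExt(A,\Gmlog)$, brings this down to $R$.
\end{enumerate}

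This could be grafted onto your framework, and there it is even slightly simpler. Since $A'=A/H$ is a log abelian variety and $A'\to A^*$ is injective, over each $R/m_R^{n+1}$ the image of $\phi$ comes from a unique element of $A'(R/m_R^{n+1})$, because there $A'$ is the constant-degeneration dual. These elements are automatically compatible, so no choice is needed. GAGF for sections of $A'$, plus the same approximation argument, then finishes. Without some such argument, your proposal does not prove surjectivity.
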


  When $p$ is a principal polarization, this together with 
\cite{KKN7} Proposition 2.8 implies that 
$A^*$ here coincides with the dual in the sense of \cite{KKN7} 3.3. 

Before proving Proposition \ref{dual1}, we show some other propositions. 

\begin{prop}
\label{p:Hom}
  Let $A$ be a log abelian variety over $S$. Then the following holds. 

$(1)$ $\cHom(A,\Gmlog)=0$.

$(2)$ $\cHom_{\ket}(A,\Gmlog)=0$.

$(3)$ $\cHom_{\kfl}(A,\Gmlog)=0$.
\end{prop}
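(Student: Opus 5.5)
Since $(3)$ implies $(2)$ and $(2)$ implies $(1)$ (a homomorphism of étale sheaves $A\to\Gmlog$ extends to one of kfl sheaves, because $A$ is a kfl sheaf by Proposition \ref{p:kfl} and $\Gmlog$ is one as well), I will prove $(3)$ directly. Let $h: A\to\Gmlog$ be a homomorphism of kfl sheaves over some $U$ fs over $S$; replacing $S$ by $U$ we work over $S$.

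First I reduce to the case where $S$ is an fs log point. The sheaf $\Gmlog$ is separated, in the sense that a section of $\Gmlog$ over an fs log scheme $V$ is trivial if it is trivial at every point after strict henselization. A section of $A$ over $V$ gives a section $h(a)$ of $\Gmlog$ over $V$. By \cite{KKN4} Proposition 9.2 (2) we may assume $S$ is of finite type over $\bZ$. The question whether $h(a)=1$ can then be checked on the pullbacks to $\Spec$ of the Artinian quotients $\cO_{V,\bar v}/m^r$, with the induced log structure, for all $r$. This uses Krull's intersection theorem on $\cO_V^\times$ together with the fact that $\bar M$ is unchanged under these thickenings. An Artinian fs log scheme with a single point is not literally an fs log point, so I run a second reduction on it. The standard filtration $1+m^r$ of $\Gm$ gives graded pieces $\mathbb{G}_a$-like. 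A homomorphism from $A$ to such a unipotent additive group is zero: after restricting to the semiabelian part $G$, this is the classical fact that there is no nonzero homomorphism from an abelian scheme or torus to $\mathbb{G}_a$ over a reduced base, and the reduced base case is the log point. Modulo these thickening arguments, the core case is therefore $S$ an fs log point, where $A$ has constant degeneration.

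Over an fs log point, use the exact sequence $0\to G\to A\to \cHom(\bar X,\Gmlog/\Gm)^{(\bar Y)}/\bar Y\to 0$ from condition 4.1.2. The restriction $h|_G: G\to\Gmlog$ factors through $\Gm$. Indeed, the composite $G\to\Gmlog/\Gm$ is a map from a connected smooth group scheme to a sheaf whose sections are étale/kfl locally constant-like, namely a quotient of $\bar M^{\rm gp}$, so it vanishes. Thus $h|_G\in\Hom(G,\Gm)=X$ is a character $\chi$. Now kfl locally, and after passing to a suitable cover, $A$ is the quotient $\tilde G_{\log}/Y$ of the log extension of $G$ by $Y$. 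In this setting the constraint that $h$ descends forces the pairing of $\chi$ with $Y$ to be trivial, and the part of $h$ on $\cHom(\bar X,\Gmlog/\Gm)^{(\bar Y)}$ is constrained in the same way. The nondegeneracy of the admissible pairing $X\times Y\to\Gmlog/\Gm$ on the $\bQ$-level then gives $\chi=0$. Once $\chi=0$, $h$ factors through the quotient $\cHom(\bar X,\Gmlog/\Gm)^{(\bar Y)}/\bar Y$. This is a torsion-free-like group modulo a lattice, and any homomorphism from it to $\Gmlog$ must kill the torsion-free divisible-ish part and hence kill $\bar Y$-cosets; it is therefore zero. Concretely, its torsion is $A[n]/G[n]$-type finite, and homomorphisms from a finite group into $\Gmlog$ over a log point land in $\mu_n$. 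These have to extend compatibly through multiplication by $n$ on a divisible object, which forces them to be trivial.

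The main obstacle will be the log point case, specifically showing that the character $\chi=h|_G$ is zero. This needs a careful use of nondegeneracy of the pairing and of the description of $A$ kfl locally as $G_{\log}^{(Y)}/Y$. To handle it I would first try to show $h\circ n = 0$ forces $h=0$ via the divisibility of $A$ as a kfl sheaf, since $n:A\to A$ is surjective with finite kernel $A[n]$. I would then compare with $\cHom(A[n],\Gmlog)=\cHom(A[n],\Gm)\cong A^*[n]$ (Cartier duality, \ref{W2}) to bound $h$ to a torsion, and hence constant, contribution that must vanish.
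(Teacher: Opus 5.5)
Your first step matches the paper: since $A$ and $\Gmlog$ are both kfl sheaves, a homomorphism $A\to\Gmlog$ is the same for the \'etale, k\'et and kfl topologies, so (1)--(3) are equivalent. The paper then takes the vanishing itself from Proposition 2.1 of \cite{KKN5}. You instead try to reprove it, and the core of that attempt, the log point case, has a genuine gap.

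Once you have $\chi=h|_G\in X$, the lift $\tilde h$ of $h$ to $G_{\log}^{(Y)}$ is determined by $\chi$ only up to a homomorphism $\psi\colon \cHom(X,\Gmlog/\Gm)^{(Y)}\to\Gmlog$. Modulo $\Gm$, the condition that $\tilde h$ kills $Y$ only says that $\langle\chi,y\rangle+\bar\psi(\langle\,\cdot\,,y\rangle)=0$. Evaluation at $\chi$ is a perfectly good homomorphism $\cHom(X,\Gmlog/\Gm)^{(Y)}\to\Gmlog/\Gm$, so your claim that descent forces $\langle\chi,Y\rangle=0$ is unjustified. You must show that such a map has no lift to $\Gmlog$, and this is a genuinely logarithmic point. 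For instance, on a kummer cover of the log point $s$, the value of $\psi$ on $\phi/n$ must be invariant under the Galois group of the cover. This forces its image to lie in $\bar M_s^{\mathrm{gp}}$ rather than in $\frac1n\bar M_s^{\mathrm{gp}}$. Letting $n$ run over integers prime to the residue characteristic then gives $\bar\psi(\phi)\in\bigcap_n n\bar M_s^{\mathrm{gp}}=0$. Only after this does nondegeneracy of the pairing give $\chi=0$.

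Your final step has the same defect. The argument that homomorphisms from the torsion land in $\mu_n$ and must ``extend compatibly through multiplication by $n$ on a divisible object'' proves nothing at the level of abstract groups. Indeed, the prime-to-$p$ part of $\mathbb{Q}/\mathbb{Z}$ maps isomorphically onto the roots of unity in $k^\times$ for $k$ separably closed. What actually kills a homomorphism $\theta\colon\cHom(\bar X,\Gmlog/\Gm)^{(\bar Y)}/\bar Y\to\Gmlog$ is naturality with respect to non-strict morphisms of fs log schemes over the log point. For example, in the Tate curve case, once $\theta$ is known to be $\Gm$-valued, consider morphisms from the kummer log point with $\bar M=\frac1n\mathbb{N}$ into a log point with $\bar M=\mathbb{N}^2$ and period $q=(1,1)$. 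These pull the section $(1,0)$ back to $\frac an q$ for every $0<a<n$, whence $\theta(\frac1n q)=\theta(\frac2n q)=\theta(\frac1n q)^2$ for $n\geq 3$. Nothing of this kind appears in your proposal.

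There are two further gaps. Your reduction from Artinian bases to the log point treats homomorphisms from $G$ to the graded pieces $m^k\cO/m^{k+1}\cO$, but not those from $A/G$. Your closing paragraph essentially concedes that the log point case is open, and the fallback via $\cHom(A[n],\mu_n)\cong A^*[n]$ gives no reason for $h$ to be torsion in the first place.

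As written, the proposal proves only the equivalence of (1)--(3). For the vanishing you need either to cite Proposition 2.1 of \cite{KKN5}, as the paper does, or to supply an argument of the kind just sketched.
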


\begin{pf}
  Since $A$ and $\Gmlog$ are kfl sheaves (Proposition \ref{p:kfl} and \cite{Kato:FI2} Theorem 3.2), (1)--(3) are equivalent and reduced to \cite{KKN5} Proposition 2.1.
\end{pf}

  The next is the log Hilbert 90, which was already sketched in the correction to \cite{KKN5} in the last part of \cite{KKNlapel}.

\begin{prop}
\label{p:Hilbert90}
  Let $X$ be an fs log scheme and let $A$ be a log abelian variety over an fs log scheme $S$.
  Then the following holds.

  $(1)$ $H^1_{\kfl}(X,\Gmlog)=H^1_{\ket}(X,\Gmlog)=H^1(X,\Gmlog)$. 

  $(2)$ $H^1_{\kfl}(A,\Gmlog)=H^1_{\ket}(A,\Gmlog)=H^1(A,\Gmlog)$. 

\end{prop}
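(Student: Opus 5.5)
We plan to compare the three cohomology groups through the Leray spectral sequences of the morphisms of sites $\varepsilon_{\ket}\colon X_{\ket}\to X_{\et}$ and $\varepsilon_{\kfl}\colon X_{\kfl}\to X_{\ket}$ (and the composite). The sheaf $\Gmlog$ is a kfl sheaf by \cite{Kato:FI2} Theorem 3.2, so $\varepsilon_*\Gmlog=\Gmlog$ at each stage. The low degree exact sequence
$$0\to H^1(X,\varepsilon_*\Gmlog)\to H^1_{\kfl}(X,\Gmlog)\to H^0(X,R^1\varepsilon_*\Gmlog)$$
then reduces (1) to the vanishing $R^1\varepsilon_*\Gmlog=0$ for $\varepsilon$ the map from the kfl (resp.\ k\'et) site to the \'etale site. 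Part (2) is then the special case where the fs log scheme is the underlying log scheme of $A$. Since $A$ is only a sheaf, we will use that it is represented \'etale locally on $S$ by fs log schemes (via its standard models). We intend either to run the same local argument on such an algebraic space, or to note that the vanishing of $R^1\varepsilon_*\Gmlog$ is an \'etale local statement and so descends to $A$.

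The main step is therefore the local vanishing: for $X$ strictly henselian local with a chart, every kfl $\Gmlog$-torsor is trivial \'etale locally. We will use the exact sequence $0\to\Gm\to\Gmlog\to\Gmlog/\Gm\to0$ on the kfl site. For the $\Gm$ part we invoke Kato's log flat Hilbert 90: $R^1\varepsilon_*\Gm$ equals $(\Gmlog/\Gm)^{\mathrm{gp}}\otimes\bQ/\Z$ (in the k\'et case, its prime-to-residue-characteristic part). The key point is that the connecting map
$$\varepsilon_*(\Gmlog/\Gm)\to R^1\varepsilon_*\Gm$$
is surjective. Indeed, a class $a\otimes(1/n)$ is the image of the section $a^{1/n}$ of $\Gmlog/\Gm$, which is defined kummer locally; its image in the \'etale-pushforward quotient $\varepsilon_*(\Gmlog/\Gm)$ maps exactly onto $M^{\mathrm{gp}}_X/\cO^\times\otimes\bQ/\Z$ after accounting for $\bQ$-divisibility. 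It then remains to show $R^1\varepsilon_*(\Gmlog/\Gm)=0$.

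Here $\Gmlog/\Gm$ on the kfl site is, locally, the sheaf $\varinjlim_n \frac1n\bar M^{\mathrm{gp}}$ evaluated kummer-locally. This is a filtered colimit of sheaves that are constant on kummer covers over a strictly henselian base, and its $H^1$ can be computed by Galois cohomology of $\varinjlim\mathrm{Hom}(\bar M^{\mathrm{gp}},\mu_n)$-type groups with coefficients in a uniquely divisible group, where it vanishes. We expect this step, together with the precise surjectivity bookkeeping, to be the main obstacle; we would cite the computation of $R^1\varepsilon_*$ in \cite{Kato:FI2} and the sketch in \cite{KKNlapel} for it.

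Finally, the k\'et equality is obtained by the identical argument with $\bQ/\Z$ replaced by its prime-to-$p$ part. Comparing the two results gives the full chain of equalities in (1) and (2).
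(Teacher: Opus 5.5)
Your d\'evissage (Leray for $\varepsilon$, Kato's computation $R^1\varepsilon_*\Gm=(\Gmlog/\Gm)\otimes\mathbb{Q}/\Z$, surjectivity of the connecting map out of $\varepsilon_*(\Gmlog/\Gm)$, then vanishing of $R^1\varepsilon_*(\Gmlog/\Gm)$) is a reasonable outline for an $X$ whose underlying scheme is locally noetherian. It does not prove the proposition as stated. The statement is for an arbitrary fs log scheme $X$, and for $A$ over an arbitrary fs $S$. The results you invoke from \cite{Kato:FI2} are established there only for locally noetherian underlying schemes: the log flat Hilbert 90 for $\Gm$, and the computation you cite for the last vanishing, which is in substance \cite{Kato:FI2} Theorem 5.1 itself. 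The paper flags exactly this restriction for Theorem 5.1.

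The missing step is the reduction to the locally noetherian case, and it is the entire content of the paper's proof of (1). The assertion is local on $X$, since it is equivalent to $R^1\varepsilon_*\Gmlog=0$. So one may take $X$ affine with a chart $P$ and write it as a filtered limit of noetherian fs log schemes, for instance of finite type over $\Z[P]$ with the log structure induced by $P$. Since $H^1_{\kfl}$, $H^1_{\ket}$ and $H^1$ of $\Gmlog$ commute with such limits, a kfl or k\'et $\Gmlog$-torsor on $X$ descends to a noetherian stage. There Kato's theorem makes it \'etale locally trivial, hence it is \'etale locally trivial on $X$.

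Two smaller points. First, your sketch of $R^1\varepsilon_*(\Gmlog/\Gm)=0$ via ``Galois cohomology with uniquely divisible coefficients'' only describes the k\'et situation. In the kfl case with residue characteristic $p$, the standard Kummer covers are torsors under non-\'etale Cartier duals such as $\mu_p$, and kfl covers also include strict fppf covers. This step would therefore need Hochschild cohomology of finite flat group schemes and an fppf-versus-\'etale comparison. Since you defer it to \cite{Kato:FI2} anyway, your d\'evissage reconstructs the cited noetherian theorem rather than giving an independent proof.

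Second, for (2): $A$ is not representable \'etale locally on $S$ (its proper models are different objects), so your first option, running the argument on ``such an algebraic space'', does not apply. Your second option is right and matches the paper. Once $R^1\varepsilon_*\Gmlog=0$ on all fs log schemes over $S$, the Leray sequence over $A$ (a kfl sheaf by Proposition \ref{p:kfl}) reduces (2) to (1). Equivalently, one uses the covering of $A$ by representable objects from \cite{KKN5} Proposition 11.1.
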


\begin{pf}
  (1) is proved in \cite{Kato:FI2} Theorem 5.1 under the assumption that $X$ is locally noetherian.
  Since the problem is local and $H^1_{\kfl}$, $H^1_{\ket}$, and $H^1$ commute with the limit of schemes, the general case is reduced to this case. 

  (2) Since $A$ is covered by representable objects by \cite{KKN5} Proposition 11.1, it is reduced to (1). 
\end{pf}

\begin{prop}
\label{p:Ext}
  Let $A$ be a log abelian variety over an fs log scheme $S$. Then the following holds. 

$(1)$ $\cExt(A,\Gmlog)(T) = \Ext(A_T,\Gmlog)$ for any $T \in (\fs/S)$. 

$(2)$ $\cExt(A,\Gmlog)$ is a kfl sheaf.

$(3)$  As a functor, $ \cExt(A, \Gmlog)$ is locally of finite presentation. 

\end{prop}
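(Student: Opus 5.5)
For (1), I would compute $\Ext(A_T,\Gmlog)$ with the local-to-global spectral sequence for the classical (strict \'etale) topology on $(\fs/T)$:
$$E_2^{p,q}=H^p(T,\cExt^q(A,\Gmlog))\Rightarrow \Ext^{p+q}(A_T,\Gmlog).$$
By Proposition \ref{p:Hom} (1), $\cHom(A,\Gmlog)=0$, so $E_2^{p,0}=0$ for all $p$. The five-term exact sequence then gives
$$0\to \Ext(A_T,\Gmlog)\to H^0(T,\cExt(A,\Gmlog))\to H^2(T,\cHom(A,\Gmlog))=0.$$
This is the isomorphism $\cExt(A,\Gmlog)(T)=\Ext(A_T,\Gmlog)$. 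Concretely, every local extension class glues, because extensions have no automorphisms: $\cHom=0$ kills both the obstruction and the ambiguity.

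For (2), I would run the same argument in the kfl topology, now using Proposition \ref{p:Hom} (3). This gives $\cExt_{\kfl}(A,\Gmlog)(T)=\Ext_{\kfl}(A_T,\Gmlog)$, and it remains to identify $\Ext_{\kfl}(A_T,\Gmlog)$ with $\Ext(A_T,\Gmlog)$. Let $\varepsilon$ be the morphism from the kfl site to the classical site. Both $A$ and $\Gmlog$ are kfl sheaves (Proposition \ref{p:kfl} and \cite{Kato:FI2} Theorem 3.2), so $\varepsilon_*\Gmlog=\Gmlog$. The Leray-type spectral sequence $\Ext^p(A,R^q\varepsilon_*\Gmlog)\Rightarrow\Ext^{p+q}_{\kfl}(A,\Gmlog)$ gives the exact sequence
$$0\to\Ext(A,\Gmlog)\to\Ext_{\kfl}(A,\Gmlog)\to\Hom(A,R^1\varepsilon_*\Gmlog).$$
By Proposition \ref{p:Hilbert90} (1), $R^1\varepsilon_*\Gmlog=0$, so the last term vanishes and the two Ext groups agree. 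Hence $\cExt(A,\Gmlog)$ agrees with the kfl sheaf $\cExt_{\kfl}(A,\Gmlog)$, and so it is a kfl sheaf. If the spectral sequence is awkward, I would instead describe an extension as a $\Gmlog$-torsor $E$ on $A$ with a group law. Kfl descent of such torsors is effective because $H^1_{\kfl}=H^1$ on $A$ by Proposition \ref{p:Hilbert90} (2), and the group law descends since $\Gmlog$ is a kfl sheaf.

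For (3), I would use (1) together with a limit argument. Take $T=\varprojlim T_i$ a filtered limit of affine fs log schemes with affine transition maps. We must show $\varinjlim_i\Ext(A_{T_i},\Gmlog)\to\Ext(A_T,\Gmlog)$ is bijective, after reducing, as in \cite{KKN4} Proposition 9.2 (2), to the case where $A$ descends to some $T_0$. I would treat an extension as a $\Gmlog$-torsor on $A$ together with the cocycle data giving the group law. Torsors: by \cite{KKN5} Proposition 11.1, $A$ is covered by representable objects, so $H^1(A,\Gmlog)$ commutes with such limits, as in the proof of Proposition \ref{p:Hilbert90} (1). Group law: the cocycle condition is an equality of sections of $\Gmlog$ on $A\times A\times A$, and $\Gmlog$ is locally of finite presentation, so it also descends to a finite stage.

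I expect (3) to be the main obstacle. A log abelian variety is not quasi-compact, so the limit arguments must be run on a suitable cover by representable pieces that is stable under base change, and one must control both injectivity and surjectivity at a finite level.
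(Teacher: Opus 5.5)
Your proofs of (1) and (2) are correct. Part (1) is exactly the paper's argument.

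\textbf{Part (2).} You take a different route for the comparison $\Ext_{\kfl}(A_T,\Gmlog)=\Ext(A_T,\Gmlog)$. You use the change-of-topology spectral sequence $\Ext^p(A,R^q\varepsilon_*\Gmlog)\Rightarrow\Ext^{p+q}_{\kfl}(A,\Gmlog)$. This needs only $\varepsilon_*\Gmlog=\Gmlog$ and $R^1\varepsilon_*\Gmlog=0$, and the latter is Proposition \ref{p:Hilbert90} (1) after sheafification. The paper instead uses \cite{KKN5} Lemma 3.9 (2) and Proposition 2.1 to reduce to Hilbert 90 on $A$ itself, Proposition \ref{p:Hilbert90} (2). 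That result in turn depends on the covering of $A$ by representable objects. Your comparison step is shorter and uses nothing about $A$ beyond its being a kfl sheaf. The torsor-descent alternative you sketch is close to the paper's route.

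\textbf{Part (3): the gap.} The inference ``$A$ is covered by representable objects, so $H^1(A,\Gmlog)$ commutes with such limits'' is not valid.
\begin{itemize}
\item Commutation of cohomology with filtered limits of the base holds for quasi-compact, quasi-separated objects. As you observe yourself, $A$ is not quasi-compact, so the covering of \cite{KKN5} Proposition 11.1 cannot consist of finitely many quasi-compact pieces.
\item Consequently, each piece of descent data of a torsor on $A_T$ descends to some $T_i$, but nothing gives a common index. The same problem affects injectivity.
\item The group-law step has the same defect. The isomorphism $p_1^*E\otimes p_2^*E\cong m^*E$ lives on $A\times A$, and the cocycle identity lives on $A\times A\times A$. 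Neither is quasi-compact, so local finite presentation of $\Gmlog$ alone does not let you descend or verify these data at a finite stage.
\item The proof of Proposition \ref{p:Hilbert90} that you cite does not help. In (1) the limit argument is applied only to a local, hence quasi-compact, fs log scheme. Part (2) is a comparison of topologies on $A$, not a statement about limits.
\end{itemize}

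You flag this as the main obstacle but do not supply the missing finiteness, which must come from the specific structure of log abelian varieties. The paper obtains it by using (1) and (2) to identify $\cExt(A,\Gmlog)(T)$ with $\Ext_{\kfl}(A_T,\Gmlog)$, and then invoking \cite{KKN5} Proposition 12.8 (1). That is where the paper uses (2), whereas your plan for (3) never uses (2).
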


\begin{pf}
(1) By the local-global spectral sequence, it is reduced to Proposition \ref{p:Hom} (1). 

(2) By the same argument, Proposition \ref{p:Hom} (3) implies 
$\cExt_{\kfl}(A,\Gmlog)(T) = \Ext_{\kfl}(A_T,\Gmlog)$ for any $T \in (\fs/S)$. 
  Hence it is enough to show $\Ext_{\kfl}(A_T,\Gmlog)=\Ext(A_T,\Gmlog)$. 
  By \cite{KKN5} Lemma 3.9 (2) and \cite{KKN5} Proposition 2.1, the last equality is reduced to Proposition \ref{p:Hilbert90} (2).

(3) By (1) and (2), it is reduced to \cite{KKN5} Proposition 12.8 (1).
\end{pf}

\begin{para}
  We prove Proposition \ref{dual1}. 

  We may assume that the base is of finite type over $\bZ$.
  Here we use a variant of Proposition 2.6 in \cite{KKN7} without the principality of the polarization, which is proved in the same way as it. 

  The polarization $p$ induces $A\to A^*$. 
  It is sufficient to prove the surjectivity of $A\to A^*$. 

Let $b\in A^*(U)$, where $U$ is an fs log scheme over $S$,  and we prove that $b$ belongs to the image. 
  We consider \'etale locally at a geometric point $u$ of $U$. 
  We may assume by Proposition \ref{p:Ext} (3) that $U$ 
is of finite type over $\Z$. 
  Let $(R,m_R)$ be the strict local ring at $u$ and $\hat R$ the completion of it. 
  By the theory of constant degeneration, the image $b_n$ of $b$ in $A^*(R/m_R^{n+1})$ ($n \ge0$) is the image of an element $a_n$ of $A(R/m_R^{n+1})$. 
  Since the kernel of $A \to A^*$ is finite so that there are only finite number of the choices of $a_n$, we can choose a compatible lift $(a_n)_n$.  
  Hence, by GAGF for sections of a log abelian variety (cf.\ \cite{KKN7} 2.10) and the injective GAGF for $\cExt(A,\Gmlog)$ 
(cf.\ Proposition \ref{p:Ext} and \cite{KKN6} Proposition 1.3), the image $\hat b$ of $b$ in $A^*(\hat R)$ is the image of an element $a$ of $A(\hat R)$. 
  This $a$ comes from an element $a'$ of $A(R')$ for some finitely generated subring $R'$ of $\hat R$ over $R$. 
  By the property of being locally of finite type of $A^*$, which is by Proposition \ref{p:Ext} (3), for some bigger $R'$, the images of $a'$ and $b$ coincide in $A^*(R')$. Take a homomorphism $R'\to R$ over $R$  by Artin's approximation theorem. 
  We get that $b$ comes from  the image of $a'$ in $A(R)$ under $A(R')\to A(R)$. 
  Again by Proposition \ref{p:Ext} (3), we see that the desired surjectivity.
\end{para}

\begin{prop}
\label{p:kfl_criterion}
  Let $(R,m)$ be a noetherian local ring endowed with an fs log structure. 
  Let $X$ be a finite fs log scheme over $R$. 
  If $X \otimes_R R/m^n$ is kfl over $R/m^n$ for any $n$, then $X$ is kfl over $R$.
\end{prop}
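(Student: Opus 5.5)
We plan to reduce to a local flatness criterion for the underlying schemes, after a base change that makes the morphism integral with controlled log structure. Recall that a morphism $f\colon X\to S$ of fs log schemes is kfl if, fppf locally on $S$ and after a Kummer base change $S'\to S$ (given by a chart $P\to P'$ with $P'$ Kummer over $P$), the base change $X'$, saturated, admits a chart making $X'\to S'$ strict and classically flat. We work on $\Spec R$ with a chart $P\to R$, where $P$ is fs and $\bar M$ at the closed point is $P$. Since $X$ is finite over $R$, it has finitely many points over the closed point. By hypothesis, $X_1:=X\otimes R/m$ is kfl over $R/m$. So there are a Kummer map $P\to P'$ and a finite flat cover such that $X_1\times^{\fs}_{P}P'$ is strict and flat over $R/m\otimes_{\Z[P]}\Z[P']$.

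The first step is to fix one Kummer extension $P\to P'$, with $P'=\frac1N P$ for suitable $N$, that works simultaneously for all $X_n:=X\otimes R/m^n$. The key point is that the monoid-theoretic data of $X$ does not depend on $n$. The log structure of $X$ and the charts on its finitely many closed points are determined by data on the special fiber. Indeed, $\bar M_X$ at a point of $X$ is the same for $X$ and $X_n$, because the $X_n$ are strict closed immersions into $X$. Kummerness of $\bar M_{S}\to\bar M_{X}$ at each closed point of $X$ is therefore read off from $X_1$. We choose $N$ so that $\bar M_{X,x}^{\rm gp}\subset \frac1N P^{\rm gp}$ for every closed point $x$. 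Then $X':=X\times^{\fs}_{\Spec\Z[P]}\Spec\Z[P']$ is finite over $R':=R\otimes_{\Z[P]}\Z[P']$, and the local ring of $R'$ at its closed point is again noetherian. Moreover, $X'\otimes R'/m'^n$ is the corresponding fs base change of $X_n$, up to cofinality of powers of ideals.

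The second step uses the fact that kfl base change preserves kfl-ness, together with the choice of $N$. From these, each $X'_n$ is strict over $R'_n$, since the chart of $X_n$ after the base change becomes $P'$ at every point. Kfl plus strict implies classically flat (\cite{Kato:FI2}; this can also be deduced from the structure of kfl morphisms). Hence each $X'_n$ is classically flat over $R'/m'^n$. In addition, strictness of $X'\to R'$ itself follows, because the log structure is checked at closed points and these lie in $X'_1$.

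The third step is classical. $X'$ is finite over the noetherian local ring $R'$ (or over its localizations at its finitely many maximal ideals), and $X'\otimes R'/m'^n$ is flat for all $n$. By the local flatness criterion, or equivalently by flatness of the completion together with faithful flatness of $R'\to\hat R'$ for finite modules, $\mathcal O_{X'}$ is flat over $R'$. Thus $X'\to\Spec R'$ is strict and flat, which means $X\to \Spec R$ is kfl.

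The main obstacle is the first step: showing that a single Kummer cover and a single chart work uniformly for all $n$, and that fs base change commutes with reduction mod $m^n$. If needed, we first pass to the strict henselization, which is harmless since it is faithfully flat. We then use that saturation of the fiber product only changes the scheme by a finite morphism that is an isomorphism on reduced structures, so that the comparison holds up to nilpotents compatibly in $n$.
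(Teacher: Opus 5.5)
Your proposal is correct and follows the paper's proof. Kummerness is read off from the closed fiber. A Kummer base change then makes $X$ strict over the base; the paper simply cites \cite{INT} Theorem 0.2 here, and that same result is what justifies your final ``which means $X\to\Spec R$ is kfl''. Flatness of the finite $X$ then follows from the local flatness criterion via the artinian quotients $R/m^n$.

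The step you call the main obstacle is actually immediate. Since $R'\to R'/m^nR'$ is strict and fs fiber products are transitive, $X'\otimes_{R'}R'/m^nR'$ is exactly the fs base change of $X_n$. Your fallback claim, that saturation changes the scheme only by a finite map which is an isomorphism on reduced structures, is false in general: for $\mathbb{N}\to\frac12\mathbb{N}$ pushed out along itself, the saturation map is the normalization of a node. You never need that claim, though.
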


\begin{pf}
  The kummerness is reduced to that on the closed point. 
  Taking a kfl cover, we may assume that $X$ is strict over $R$ by \cite{INT} Theorem 0.2.
  Then, since $X$ is finite, its flatness is reduced to the case where $R$ is complete and then to the case where $R$ is artinian.
\end{pf}

\begin{prop}\label{dual4} The kernel of a polarization $p: A \to A^*$ belongs to $(\mathrm{fin}/S)_{\mathrm r}$. 
\end{prop}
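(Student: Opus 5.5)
Write $K=\ker(p)$; the plan is to show that $K$ is represented by a finite kfl group fs log scheme over $S$. First we reduce to a local noetherian situation. By \cite{KKN4} Proposition 9.2 (2), we may assume $S$ is of finite type over $\bZ$. We then work étale locally around a geometric point $\bar s$, and may replace $S$ by the strict localization $\Spec(R)$ at $s$; the finite-presentation properties (Proposition \ref{p:Ext} (3) and the fact that $A$ is locally of finite presentation) let us spread the conclusion back out. Since $p$ is a polarization, it is an isogeny on every fs log point, so it is killed by some $n\ge1$ fiberwise. Because the degree is locally constant, we may choose $n$ killing $K$ near $s$. Hence $K\subset A[n]$, and $A[n]$ lies in $(\mathrm{fin}/S)_{\mathrm r}$ by \cite{KKN4} Proposition 18.1.

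Representability of $K$ comes from the diagonal. Since $A^*=p(A)$ by Proposition \ref{dual1} and $A^*\subset\cExt(A,\Gmlog)$ is a kfl sheaf (Proposition \ref{p:Ext} (2)), $K$ is the fiber of $A[n]\to A^*$ over the zero section, that is, the fiber product of $A[n]\to \cExt(A,\Gmlog)\leftarrow S$. To get a closed subscheme of $A[n]$, I would use the finiteness of the diagonal of $A^*$. Where to find it: over $\bar s$, and more generally over each $R/m^r$, we are in the constant degeneration case. There $A^*$ is a log abelian variety by \ref{W2}, so its diagonal is finite and the zero section is a closed immersion. Therefore $K\otimes R/m^r$ is a closed subgroup fs log scheme $K_r$ of $A[n]\otimes R/m^r$, and by the constant-degeneration theory it is finite kfl over $R/m^r$; this is just the kernel of a polarization of a constant-degeneration log abelian variety.

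Next we algebraize. The compatible system $(K_r)_r$ gives a closed subscheme of $A[n]\otimes\hat R$ by GAGF for finite objects, since $A[n]$ is finite over $R$. By the injective GAGF for $\cExt(A,\Gmlog)$ (\cite{KKN6} Proposition 1.3, together with Proposition \ref{p:Ext}), this closed subscheme represents $K$ over $\hat R$. By Proposition \ref{p:kfl_criterion}, it is kfl over $\hat R$ because each truncation is. We then descend from $\hat R$ to $R$ by Artin approximation, as in the proof of Proposition \ref{dual1}: the condition "sections of $A[n]$ mapping to zero in $\cExt$" is locally of finite presentation by Proposition \ref{p:Ext} (3), so $K$ is a closed subfunctor of the finite $A[n]$ over $R$. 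Its finiteness and the kfl property follow from Proposition \ref{p:kfl_criterion} applied over $R$ directly.

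The main obstacle I expect is proving that $K$ is a \emph{closed} subobject of $A[n]$ over $R$ itself, not just formally. That $K$ maps to zero in $\cExt$ is a priori not a closed condition, because $\cExt(A,\Gmlog)$ is not known to be representable with a closed zero section. My first attempt would be to use that $p$ lands in $A^*$. I would then show that the zero section of $A^*$ is relatively representable by closed immersions, by pulling back along $p$ and using Proposition \ref{p:Hom} to control automorphisms. If that fails, the fallback is to argue pointwise: take the closed subscheme obtained from the formal algebraization and check that it agrees with $K$, using the injectivity in GAGF and approximation.
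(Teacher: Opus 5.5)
Your proof has a genuine gap exactly where you anticipated it: you never show that $K$ is representable, i.e.\ that it is a closed subscheme of $A[n]$ over $R$, let alone over $S$. The formal step does not supply this.

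\begin{itemize}
\item Injective GAGF for $\cExt(A,\Gmlog)$ only says that a section over a complete local base which vanishes modulo every $m^r$ vanishes. This lets you check that the algebraized $Z\subset A[n]_{\hat R}$ lies in $K$. It does not give the converse for an arbitrary fs log scheme $T$ over $\hat R$. For $T$ lying over the generic point of $\Spec\hat R$, the truncations $T\otimes R/m^r$ are empty and carry no information.
\item ``Locally of finite presentation'' plus Artin approximation cannot make a subfunctor of $A[n]$ closed. Nor can it descend $Z$ from $\hat R$ to $R$ unless you already know $K$ is representable.
\item Your fallbacks are circular. Closedness of the zero section of $A^*$ presupposes that $A^*$ is a log abelian variety, which is what this proposition is used to prove. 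A pointwise comparison only controls complete local points, not the functor.
\end{itemize}

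The missing idea is that you never need the zero section of $\cExt(A,\Gmlog)$ or of $A^*$. You only need the zero section of $\cExt(A,\Gmlog)[n]$, since $p(A[n])$ lands there, and that object is finite.

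Apply $\cHom_{\kfl}(-,\Gmlog)$ to $0\to A[n]\to A\xrightarrow{n}A\to 0$. Using $\cHom_{\kfl}(A,\Gmlog)=0$ (Proposition \ref{p:Hom} (3)) and $\cExt=\cExt_{\kfl}$ (proof of Proposition \ref{p:Ext} (2)), you get $\cExt(A,\Gmlog)[n]\cong\cHom_{\kfl}(A[n],\mu_n)$, the Cartier dual of $A[n]$. This lies in $(\mathrm{fin}/S)_{\mathrm r}$: either because $A[n]\in(\mathrm{fin}/S)_{\mathrm d}$ (Remark \ref{r:A[n]}), or by \ref{cat} (4) and the fs log point case, where it is $A^*[n]$.

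Hence $K=\ker(A[n]\to\cExt(A,\Gmlog)[n])$ is the kernel of a homomorphism of finite group fs log schemes. It is therefore a closed subgroup scheme of $A[n]$, finite over $S$, globally and with no completion or approximation. Only then does your use of Proposition \ref{p:kfl_criterion}, reducing kflness to artinian bases, go through.

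Two further points need actual arguments rather than citations of ``constant-degeneration theory'' or ``the degree is locally constant.''

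\begin{itemize}
\item The constant-degeneration case itself: the kernel has a filtration with graded pieces $\mathrm{coker}(Y\to X)$, $\ker(B\to B^*)$ and $\ker(T\to T^*)$. These are all classical finite flat, so the kernel is in $(\mathrm{fin})_{\mathrm d}$ by \ref{cat} (2).
\item That a single $n$ kills $K$ near $s$: the paper reduces this to constant degeneration by the argument of \cite{KKN7} 2.10.
\end{itemize}
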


\begin{pf} 
  First we may assume that an integer $n$ kills the kernel of $p$. 
  This is reduced to the case of constant degeneration by the same argument in \cite{KKN7} 2.10. 
  Then the kernel of $p$ is the kernel of the induced 
$A[n] \to \cExt(A,\Gmlog)[n]$.

  Next we prove that $\cExt(A,\Gmlog)[n]$ belongs to $(\mathrm{fin}/S)_{\mathrm r}$. 
  By the exact sequence 
$$0 \to A[n] \to A \overset n \to A \to 0$$
and Proposition \ref{p:Hom} (3), we see that 
$\cExt(A,\Gmlog)[n]$ is isomorphic to 
$\cHom_{\kfl}(A[n],\Gmlog)=\cHom_{\kfl}(A[n],\mu_n)$.
  By \ref{cat} (4), we may assume that the base is an fs log point. 
  Then the dual $A^*$ is a log abelian variety and 
$\cHom_{\kfl}(A[n],\mu_n)$ is isomorphic to $A^*[n]$, which belongs to 
$(\mathrm{fin}/S)_{\mathrm r}$ by Remark \ref{r:A[n]}.

  Since $A[n]$ is also belongs to $(\mathrm{fin}/S)_{\mathrm r}$ by Remark \ref{r:A[n]}, the kernel of $p:A[n] \to \cExt(A,\Gmlog)[n]$ is represented by a finite group fs log scheme. 

  We prove that it is kfl. 
  We claim that it is reduced to the case of constant degeneration. 
  In fact, 
  we may assume that $S$ is the spectrum of a local noetherian ring. 
  Then by Proposition \ref{p:kfl_criterion}, its kflness is reduced to the case where $S$ is the spectrum of an artinian local ring, and hence to the case where $A$ is of constant degeneration. 

  We assume that $A$ is of constant degeneration. 
  Then the kernel $H$ of $p$ has an increasing filtration $W$ such that $W_0H=H$, $W_{-3}H=0$, and $\gr^W_wH$ for $w=0$ (resp.\ $w=-1$, resp.\ $w=-2$) is the cokernel (resp.\ kernel, resp.\ kernel) of the associated homomorphism $p:Y\to X$ (resp.\ $p:B\to B^*$, resp.\ $p:T\to T^*$). These belong to $(\mathrm{fin}/S)_{\mathrm c}$.
Hence by \ref{cat} (2), $H$ belongs to $(\mathrm{fin}/S)_{\mathrm d}$.
\end{pf} 

  The above proof together with \ref{cat} (4) also shows the following proposition. 

\begin{prop}
\label{d}
For any $n \ge1$, the Cartier dual of $A[n] \in (\mathrm{fin}/S)_{\mathrm d}$ is $A^*[n]$.
\end{prop}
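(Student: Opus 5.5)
The plan is to build an identification of $A^*[n]$ with $\cHom_{\kfl}(A[n],\mu_n)$ over an arbitrary base, and then verify it by restricting to fs log points, using \ref{cat} (4) together with the constant degeneration case \ref{W2}. Here $A$ is a polarizable log abelian variety, so that $A^*$ is a log abelian variety by Proposition \ref{dual1} and Theorem \ref{A/H}, applied with the kernel described in Proposition \ref{dual4}.

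First, the short exact sequence $0\to A[n]\to A\overset{n}{\to} A\to 0$ of kfl sheaves, combined with Proposition \ref{p:Hom} (3) and Proposition \ref{p:Ext} (2), gives
$$\cExt(A,\Gmlog)[n]\cong \cHom_{\kfl}(A[n],\Gmlog)=\cHom_{\kfl}(A[n],\mu_n).$$
This is exactly the argument used in the proof of Proposition \ref{dual4}. Since $A^*\subset \cExt(A,\Gmlog)$, we get an injection
$$A^*[n]\hookrightarrow \cHom_{\kfl}(A[n],\mu_n)=A[n]^\vee,$$
where $^\vee$ denotes the Cartier dual. The claim is that this injection is an isomorphism. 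By Remark \ref{r:A[n]} (or Proposition \ref{W4} (1)), $A[n]$ lies in $(\mathrm{fin}/S)_{\mathrm d}$, so $A[n]^\vee$ lies in $(\mathrm{fin}/S)_{\mathrm r}$.

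Next I would show that the subsheaf $A^*[n]$ exhausts $A[n]^\vee$. A local section of $A[n]^\vee$ is an element $e$ of $\cExt(A,\Gmlog)$ killed by $n$. By the definition of $A^*$, $e$ lies in $A^*$ exactly when its pullback to every fs log point lies in the dual there. Over an fs log point, $A$ has constant degeneration, and \ref{W2} says that $A^*[n]$ is the full Cartier dual of $A[n]$. So every $n$-torsion extension class over a log point lies in $A^*$. The one point to check is that the identification in \ref{W2} agrees with the map built from the $\cExt$ sequence above, which follows from functoriality of the connecting map. Hence every section of $\cExt(A,\Gmlog)[n]$ lies in $A^*$, so $A^*[n]=\cExt(A,\Gmlog)[n]\cong A[n]^\vee$.

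Finally, the statement implicitly also asserts that $A^*[n]$ belongs to $(\mathrm{fin}/S)_{\mathrm d}$, as the Cartier dual of an object of $(\mathrm{fin}/S)_{\mathrm d}$. This is automatic, since the Cartier dual of $A^*[n]\cong A[n]^\vee$ is $A[n]$, which lies in $(\mathrm{fin}/S)_{\mathrm r}$. Alternatively, one can apply Proposition \ref{W4} (1) to the log abelian variety $A^*$. The main obstacle I expect is the compatibility step: the pointwise definition of $A^*$ must match the $\cExt$ description, so that membership in $A^*$ really can be tested on fs log points for torsion sections. If that becomes delicate, I would instead use $A^*=\operatorname{Im}(p)$ from Proposition \ref{dual1}, and compare $A[n]\to A^*[n]$ with the pointwise statement via \ref{cat} (4).
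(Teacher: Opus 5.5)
Your proposal is correct and is essentially the paper's argument: you use the identification $\cExt(A,\Gmlog)[n]\cong\cHom_{\kfl}(A[n],\mu_n)$ from the proof of Proposition \ref{dual4}, then reduce to fs log points, where \ref{W2} applies. The differences are minor. You make explicit that the pointwise definition of $A^*$ forces $A^*[n]=\cExt(A,\Gmlog)[n]$, and you cite Remark \ref{r:A[n]} for $A[n]\in(\mathrm{fin}/S)_{\mathrm d}$ where the paper invokes \ref{cat}~(4); also, the polarizability you assume is never actually used in the main argument.
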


  The next is the main result of this article. 

\begin{thm}\label{dualla} For an \'etale locally polarizable log abelian variety $A$, its dual $A^*$ is a log abelian variety.
\end{thm}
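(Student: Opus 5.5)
Since being a log abelian variety is an \'etale local property on the base, and $A^*$ is defined as a subsheaf of $\cExt(A,\Gmlog)$ on the \'etale site, the plan is to work \'etale locally on $S$ and assume that $A$ has a polarization $p\colon A\to \cExt(A,\Gmlog)$. Formation of $A^*$ is compatible with \'etale localization: its defining condition is tested on fs log points, and $\cExt(A,\Gmlog)$ is an \'etale sheaf. So it suffices to treat the case where a global polarization $p$ exists.

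In that case Proposition \ref{dual1} gives that the image of $p$ equals $A^*$. Hence $p$ induces a surjection $A\to A^*$ of \'etale sheaves whose kernel is $H:=\ker(p)$. By Proposition \ref{dual4}, $H$ is a subgroup sheaf of $A$ belonging to $(\mathrm{fin}/S)_{\mathrm r}$. Theorem \ref{A/H} then says that the kfl quotient sheaf $A/H$ is a log abelian variety. It remains to identify $A/H$ with $A^*$.

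This identification is the main subtle step, because $A^*$ is a priori only an \'etale sheaf while $A/H$ is a kfl quotient. I would argue as follows. By Proposition \ref{p:Ext} (2), $\cExt(A,\Gmlog)$ is a kfl sheaf, so the kfl sheafification of the image presheaf of $p$ sits inside $\cExt(A,\Gmlog)$. Therefore $A/H$ (kfl quotient) injects into $\cExt(A,\Gmlog)$, and its image is the kfl-sheaf image of $p$.

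We must check that this kfl image equals $A^*$, that is, that a section of $\cExt(A,\Gmlog)$ which lies kfl locally in $p(A)$ already lies \'etale locally in $A^*$. The membership condition defining $A^*$ is tested on fs log points. Over an fs log point, where the degeneration is constant, the dual is a log abelian variety and hence a kfl sheaf by Proposition \ref{p:kfl}. It is also a subsheaf of the kfl sheaf $\cExt$. So membership in $A^*$ is a kfl-local condition, and sections of the kfl image belong to $A^*$. Conversely, $A^*=p(A)$ is contained in the kfl image. Hence $A^*\cong A/H$ as sheaves, and $A^*$ is a log abelian variety.

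If the kfl-locality of membership in $A^*$ requires more care, for instance a descent of kfl covers to fs log points, I would fall back on Proposition \ref{dual1} applied after the base change along a kfl cover $U'\to U$. Together with the fact that $A/H$ is already a kfl sheaf and representability-type arguments, this compares the two sheaves on the \'etale site directly.
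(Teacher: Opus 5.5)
Your proposal is correct and follows the paper's proof. You reduce to a global polarization $p$, take surjectivity from Proposition \ref{dual1} and $\ker p\in(\mathrm{fin}/S)_{\mathrm r}$ from Proposition \ref{dual4}, and conclude by Theorem \ref{A/H}. The paper simply writes $A^*=A/H$; your check that the kfl quotient agrees with the subsheaf $A^*\subset\cExt(A,\Gmlog)$ fills in a point the paper leaves implicit (membership in $A^*$ is kfl local because it is tested on fs log points, where the dual is a kfl sheaf).

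One small correction: the surjectivity in Proposition \ref{dual1} should be read kfl locally, not as a surjection of \'etale sheaves. Étale surjectivity already fails for the polarization $\ell\lambda_0$ of a constant ordinary elliptic curve $E$ over $k[\epsilon]/(\epsilon^2)$, with $k$ algebraically closed of characteristic $\ell>0$, because $\ell$ kills the tangent part of $E(k[\epsilon]/(\epsilon^2))$. Your argument only uses that $A^*$ lies in the kfl image of $p$, so it is unaffected.
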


\begin{pf} We may assume that $A$ is polarizable. 
  Let $p:A\to A^*$ be a polarization. This map is surjective by Proposition \ref{dual1}. Further, the kernel $H$ of this map belongs to 
$(\mathrm{fin}/S)_{\mathrm r}$ by Proposition \ref{dual4}. 
By Theorem \ref{A/H}, $A^*=A/H$ is a log abelian variety.
\end{pf}

\noindent {\it Remark.} 
When the degree of the polarization $A \to A^*$ is invertible on the base $S$, the kernel of the polarization is log \'etale. 
  Hence, the above proof of Theorem \ref{dualla} in this situation can be simplified. 

\begin{prop}
  Let $A$ be an \'etale locally polarizable log abelian variety. 
  Let $G$ and $\bar X \times \bar Y \to \Gmlog/\Gm$ be the semiabelian part and the associated admissible pairing, respectively. 
  Let $G^*$ and $\bar X^* \times \bar Y^* \to \Gmlog/\Gm$ be the semiabelian part and the associated admissible pairing for $A^*$, respectively. 
  Then the following hold.   

$(1)$ $\bar Y^*$ is identified with $\bar X$, $\bar X^*$ is identified with $\bar Y$, and $\bar X \times \bar Y \to \Gmlog/\Gm$ is identified with 
$\bar X^* \times \bar Y^* \to \Gmlog/\Gm$ after interchanging the first and the second variables. 

$(2)$ $G^*$ is identified with $\cExt(A, \Gm)$.
\end{prop}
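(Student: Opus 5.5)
We reduce to the polarizable case and work étale locally on $S$ around a point $s$. Fix a polarization $p\colon A\to A^*$ with kernel $H\in(\mathrm{fin}/S)_{\mathrm r}$, which is possible by Proposition \ref{dual4}. By Proposition \ref{dual1} and Theorem \ref{dualla} we then have $A^*=A/H$. Choose $n\geq 1$ killing $H$. The construction in the proof of Theorem \ref{A/H} describes $(G^*,\bar X^*,\bar Y^*)$ in terms of $(G,\bar X,\bar Y)$ and $H$. We use the dévissage of \ref{Howto} along the weight filtration of $H$ from Proposition \ref{W4}. In case (i), $X$ shrinks to the kernel of $X\to\cHom(H,\Gm)$. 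In case (ii), only $G$ changes. In case (iii), $Y$ grows to $\frac1n Y''$.

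For (1), the pairing for $A^*$ is then $\bar X'\times\bar Y'\to\Gmlog/\Gm$, with $\bar X'\subset\bar X$ and $\bar Y\subset\bar Y'\subset\bar Y_{\bQ}$ obtained by restricting or extending the pairing of $A$. To identify $\bar X'$ with $\bar Y$ and $\bar Y'$ with $\bar X$, we use the homomorphism $\bar Y\to\bar X$ induced by $p$; on fs log points it is the map $p\colon Y\to X$ appearing in the proof of Proposition \ref{dual4}. Two checks remain. First, the chain of modifications turns $\bar X$ into $p(\bar Y)\cong\bar Y$, because $\gr^W_{-2}H=\ker(T\to T^*)$ is Cartier dual to $X/p(Y)$. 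Second, $\bar Y$ becomes $p^{-1}(\bar X)\otimes\bQ\cap\ldots\cong\bar X$, because $\gr^W_0H=\mathrm{coker}(Y\to X)$. Compatibility of the pairings follows from the symmetry of a polarization, $\langle x,p(y)\rangle=\langle y,p(x)\rangle$. All of these statements concern étale sheaves of lattices and a map into $\Gmlog/\Gm$, so they may be checked on strict henselizations. We reduce to fs log points, and hence to the constant degeneration case of \cite{KKN5} 1.2, where the identifications are part of the definition of the dual. Since the construction is canonical, the identifications glue and do not depend on $p$.

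For (2), we construct a natural map $G^*\to\cExt(A,\Gm)$. The exact sequence $0\to\Gm\to\Gmlog\to\Gmlog/\Gm\to0$ gives
$$\cHom(A,\Gmlog/\Gm)\to\cExt(A,\Gm)\to\cExt(A,\Gmlog)\to\cExt(A,\Gmlog/\Gm),$$
with $\cHom(A,\Gmlog)=0$ by Proposition \ref{p:Hom}. We then show two things. First, $\cExt(A,\Gm)\to\cExt(A,\Gmlog)$ is injective. This requires $\cHom(A,\Gmlog/\Gm)=0$; the target is étale locally a sheaf of lattices ($\bar M^{\mathrm{gp}}$), and a map from $A$ kills $G$ and factors through $\cHom(\bar X,\Gmlog/\Gm)^{(\bar Y)}/\bar Y$, which has no nonzero maps to such a sheaf. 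Second, the image of $\cExt(A,\Gm)$ is exactly the semiabelian part of $A^*=A/H$, which is $G/H$ or $G$ in the cases above. The latter can be checked on fs log points, where $G^*$ of the constant-degeneration dual is $\cExt(A,\Gm)$ by \cite{KKN5}. The general case follows by the same GAGF and Artin approximation argument used for Proposition \ref{dual1}, together with Proposition \ref{p:Ext} (3).

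I expect the main obstacle to be (2) over a non-point base. One must show that a section of $A^*$ which is in $G^*$ (equivalently, lies in $W_{-1}$-type data pointwise) comes from $\cExt(A,\Gm)$. This needs the locally-of-finite-presentation property and approximation, applied to $\cExt(A,\Gm)$ as well, so an analogue of Proposition \ref{p:Ext} for $\Gm$ coefficients must be established.
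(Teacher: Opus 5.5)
\textbf{Assessment.} Your part (1) ends where the paper's does, with a reduction to the constant-degeneration case. The dévissage bookkeeping via $p$ is harmless but not needed.

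\textbf{The gap in (2).} Your first step matches the paper: the injection $\cExt(A,\Gm)\hookrightarrow\cExt(A,\Gmlog)$, whose image is $\ker\bigl(\cExt(A,\Gmlog)\to\cExt(A,\Gmlog/\Gm)\bigr)$. The problem is the equality of this image with $G^*$. You check it on fs log points and then appeal to ``the same GAGF and Artin approximation argument'' as in Proposition \ref{dual1}. That argument does not transfer, and (2) is not proved as written.
\begin{itemize}
\item The inclusion $G^*\subset\cExt(A,\Gm)$ requires that a section of $G^*$ over a general $U$ maps to zero in $\cExt(A,\Gmlog/\Gm)$. Vanishing in that sheaf is not detected on fs log points.
\item To run the approximation you would need an existence GAGF for extensions of $A$ by $\Gm$ over $\hat R$. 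Proposition \ref{dual1} only needed existence GAGF for sections of $A$ and injective GAGF for $\cExt(A,\Gmlog)$.
\item You would also need local finite presentation of $\cExt(A,\Gm)$.
\end{itemize}
You say yourself that these ``must be established'', and nothing in the paper supplies them.

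\textbf{The missing idea.} No approximation is needed; you can compare the two kernels directly, sheaf-theoretically, over any base.
\begin{itemize}
\item By (1), $A^*/G^*=\cHom(\bar Y,\Gmlog/\Gm)^{(\bar X)}/\bar X\subset\cHom(\bar Y,\Gmlog/\Gm)/\bar X$. So $G^*$ is the kernel of the map to $\cHom(\bar Y,\Gmlog/\Gm)/\bar X$.
\item The map $\cExt(A,\Gmlog)\to\cExt(A,\Gmlog/\Gm)$ factors through that map, followed by
\[
\cHom(\bar Y,\Gmlog/\Gm)/\bar X\to\cExt(A/G,\Gmlog/\Gm)\to\cExt(A,\Gmlog/\Gm).
\]
Here the first arrow comes from $0\to\bar Y\to\cHom(\bar X,\Gmlog/\Gm)^{(\bar Y)}\to A/G\to 0$, and the second from $0\to G\to A\to A/G\to 0$.
\item The first arrow is injective by \cite{KKN5} Lemma 2.6, and the second by \cite{KKN2} Lemma 6.1.1.
\item Hence the composite is injective. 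The kernel of $\cExt(A,\Gmlog)\to\cExt(A,\Gmlog/\Gm)$, which is $\cExt(A,\Gm)$, therefore coincides with $G^*$.
\end{itemize}
Both inclusions follow at once, and no analogue of Proposition \ref{p:Ext} for $\Gm$ coefficients is ever needed.
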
  

\begin{pf}
  (1) is reduced to the constant degeneration case. 

  (2) $G^*$ is the kernel of $\cExt(A,\Gmlog) \to \cHom(\bar Y,\Gmlog/\Gm)/\bar X$. On the other hand, $\cExt(A,\Gm)$ is the kernel of $\cExt(A,\Gmlog) \to 
\cExt(A,\Gmlog/\Gm)$ by \cite{KKN2} Lemma 6.1.1. 
  Hence (2) is reduced to the injectivity of the composite 
$\cHom(\bar Y,\Gmlog/\Gm)/\bar X \to \cExt(A/G,\Gmlog/\Gm) \to 
\cExt(A,\Gmlog/\Gm)$. 
  The first homomorphism is injective by \cite{KKN5} Lemma 2.6. 
  The second homomorphism is also injective by \cite{KKN2} Lemma 6.1.1.
\end{pf}

\section{$A^{**}=A$}
\label{s:duality}

We prove that the dual of the dual of $A$ is identified with $A$. 

\begin{prop}  
  Let $A$ be an \'etale locally polarizable log abelian variety. 
  Then there is a natural isomorphism $(A^*)^*=A$. 
\end{prop}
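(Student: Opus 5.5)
First I construct the natural map. Since $A^*\subset \cExt(A,\Gmlog)$, every local section $b$ of $A^*$ corresponds to an extension $0\to\Gmlog\to E_b\to A\to 0$. Dually, for a local section $a$ of $A$, the assignment $b\mapsto$ (pullback of $E_b$ along $a$) defines an extension of $A^*$ by $\Gmlog$. This is the usual biextension (Poincaré) formalism, and it gives a homomorphism $\iota: A\to \cExt(A^*,\Gmlog)$. By Theorem \ref{dualla}, $A^*$ is a log abelian variety, so $(A^*)^*$ is defined as a subsheaf of $\cExt(A^*,\Gmlog)$. The image of $\iota$ lies in $(A^*)^*$, because membership is tested on fs log points. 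In the constant degeneration case the double duality $A\cong A^{**}$ of \cite{KKN5} is induced by this same pairing.

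Next I prove that $\iota: A\to (A^*)^*$ is an isomorphism. I reduce this to a finite-level statement using a polarization. Working étale locally, choose a polarization $p: A\to A^*$, which is surjective with kernel $H\in(\mathrm{fin}/S)_{\mathrm r}$ killed by some $n$ (Propositions \ref{dual1} and \ref{dual4}). Its dual $p^*: (A^*)^*\to A^*$ satisfies $p^*\circ\iota = p$, or $p$ up to the sign conventions of the biextension. This identity is checked on fs log points, where it is the constant degeneration statement. Thus $\iota$ restricted to $H=\ker p$ must land in $\ker p^*$.

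Injectivity of $\iota$ follows from $\ker\iota\subset\ker p=H\subset A[n]$. On $A[n]$, the map $\iota$ induces $A[n]\to (A^*)^*[n]$. By Proposition \ref{d}, applied to both $A$ and $A^*$, this is the canonical map from $A[n]$ to its Cartier double dual in $(\mathrm{fin}/S)_{\mathrm d}$. That map is an isomorphism, so $\iota$ is injective on $A[n]$ and hence injective. For surjectivity, I use that $A^*$ also carries a polarization. Indeed, $(A^*)^*$ is a log abelian variety by Theorem \ref{dualla} applied to $A^*$, which is polarizable via the $n$-multiple of the dual isogeny. Then $A^{**}$ is $n$-divisible, and every section is killed, after multiplying by $n$, into the image of $\iota$. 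More precisely, $\iota\circ[n]$ factors appropriately, so $\mathrm{coker}(\iota)$ is killed by $n$. Moreover $\mathrm{coker}(\iota)$ is a quotient of $A^{**}[n]$ modulo $\iota(A[n])$, which is zero by the Cartier duality isomorphism. Hence $\iota$ is surjective.

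The main obstacle is the first step: building $\iota$ rigorously in the log setting and verifying the compatibility $p^*\circ\iota=p$ and the identification of $\iota|_{A[n]}$ with the Cartier double-duality map. I would handle both by reducing, via \ref{cat} (4) and Proposition \ref{p:Ext}, to fs log points, where everything is the constant degeneration theory of \cite{KKN5}. The symmetric description of the pairings in the last proposition of \S\ref{s:dual} controls the weight pieces there.
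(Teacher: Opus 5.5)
Your isogeny argument takes a different route from the paper's. The paper never compares two homomorphisms over a general base. It uses only three facts: $\iota$ lands in $(A^*)^*$, which by definition is a condition at fs log points; $(A^*)^*$ is a log abelian variety; and the last proposition of \S\ref{s:duality}. That proposition says a homomorphism of log abelian varieties is an isomorphism if it is one over every subscheme with constant log structure. Non-reduced such subschemes are included, and there \cite{KKN5} applies. It is proved with proper models and properness.

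Your argument would avoid that machinery, but as written it has a genuine gap. Both your injectivity and your surjectivity rest on the identity $p^*\circ\iota=p$, which you propose to check on fs log points. Over a non-reduced base, agreement of two homomorphisms $A\to A^*$ at every fs log point does not give agreement over $S$ without a rigidity theorem for homomorphisms of log abelian varieties. Neither you nor the paper supplies one. Note that \ref{cat} (4) only detects membership in the categories c, d, r, e, not equality of maps. To close this you would need, for example, to show that such a difference vanishes over artinian local bases. There the degeneration is constant, so you can use \cite{KKN5} and rigidity for the semiabelian part and the discrete data. You would then pass to general $S$ by injective GAGF and Artin approximation, as in the proof of Proposition \ref{dual1}. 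By contrast, identifying $\iota|_{A[n]}$ with the Cartier double-duality map is a formal compatibility of connecting homomorphisms, up to sign, and needs no such reduction.

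The surjectivity step is also missing its key input. Divisibility of $(A^*)^*$ by $n$ does not by itself show that $\operatorname{coker}\iota$ is killed by $n$. That divisibility holds only in the kfl topology, so the whole argument must be run there, using Propositions \ref{p:kfl} and \ref{p:Ext} (2). Your phrase ``$\iota\circ[n]$ factors appropriately'' is exactly what needs proof. What you need is a bound on $\ker p^*$, which you can get as follows:
\begin{itemize}
\item From $0\to H\to A\xrightarrow{p}A^*\to 0$ and $\cHom_{\kfl}(A,\Gmlog)=0$ (Proposition \ref{p:Hom}), you get $\ker\bigl(\cExt(A^*,\Gmlog)\to\cExt(A,\Gmlog)\bigr)\cong\cHom_{\kfl}(H,\Gmlog)$, which is killed by $n$.
\item Since $p=p^*\circ\iota$ is surjective (Proposition \ref{dual1}), every local section $x$ of $(A^*)^*$ can be written $x=\iota(a)+k$ with $p(a)=p^*(x)$ and $k\in\ker p^*$.
\item Hence $C=\operatorname{coker}\iota$ is killed by $n$. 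The snake lemma for multiplication by $n$ then gives $C=C[n]\cong(A^*)^*[n]/\iota(A[n])=0$.
\end{itemize}
With both repairs your proof would work without proper models. The paper pays for that machinery in exchange for never needing a global identity between homomorphisms.
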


\begin{pf}
  First we have a homomorphism 
$A\to \cExt(\cExt(A, \Gmlog), \Gmlog)\to \cExt(A^*, \Gmlog)$, where the first arrow exists formally and the second arrow is by $A^*\subset \cExt(A, \Gmlog)$. 
  The composite map induces a homomorphism $A\to (A^*)^*$, which is seen by reducing to the case where the base is an fs log point. 
  We prove that this $A \to (A^*)^*$ is an isomorphism. 
  We may assume that $A$ is polarizable. 
  Then, by the same argument as in the non-log case, $A^*$ is polarizable.
  Hence, by Theorem \ref{dualla}, $(A^*)^*$ is a log abelian variety. 
  Therefore, the problem is reduced to the case of constant degeneration by the next proposition. 
\end{pf}

\begin{prop}
A homomorphism $f\colon A \to A'$ 
of log abelian varieties over an fs log scheme $S$ 
is an isomorphism if the pullback to any 
subscheme of $S$ where the log structure is constant 
is an isomorphism. 
\end{prop}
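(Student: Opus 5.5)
We aim to show that $\ker f$ and $\operatorname{coker} f$, taken as kfl sheaves, are trivial. The strategy is to first show $f$ is an isogeny-type map with finite kernel, and then use the fiberwise hypothesis to force that kernel and the cokernel to vanish.

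\emph{Reduction.} Since $A$ and $A'$ are locally of finite presentation (\cite{KKN4} Proposition 9.2 (2)), we may assume $S$ is of finite type over $\Z$, and we work étale locally around a point $s$. The subschemes on which the log structure is constant (the log strata) stratify $S$. Every fs log point of $S$ lies in such a stratum, so $f$ is an isomorphism on every fs log point.

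\emph{Kernel and cokernel.} Write $K=\ker f$. On each log point, $f$ is an isomorphism, so the induced maps on the semiabelian parts and on $\bar X,\bar Y$ are isomorphisms there. These data are determined pointwise: $\bar X,\bar Y$ are étale sheaves of lattices, and a homomorphism of semiabelian schemes that is an isomorphism on all fibers is an isomorphism, by the fiberwise criterion for flat finitely presented group schemes. Hence $f$ induces isomorphisms $G\cong G'$, $\bar X'\cong \bar X$ and $\bar Y\cong\bar Y'$, compatibly with the pairings. We then apply the five lemma to the exact sequences of condition 4.1.2:
$$0\to G\to A\to \cHom(\bar X,\Gmlog/\Gm)^{(\bar Y)}/\bar Y\to 0$$
and its analogue for $A'$. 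The right-hand terms are identified, because they are built functorially from $(\bar X,\bar Y)$ and the pairing. It follows that $f$ is an isomorphism of sheaves.

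\emph{Main obstacle.} The delicate step is showing that $f$ is compatible with these structures at all. We need that $f(G)\subset G'$, and that the induced maps on $\bar X$ and $\bar Y$ exist globally and not only fiberwise. For $G$, I would argue that the composite $G\to A'\to A'/G'$ vanishes, since $\cHom(G,\cHom(\bar X',\Gmlog/\Gm)^{(\bar Y')}/\bar Y')=0$ for a semiabelian scheme $G$ with connected fibers. The maps on $\bar X$ and $\bar Y$ then come from functoriality on the quotients, as in \cite{KKN2}.

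\emph{Alternative.} If this functoriality is awkward to obtain directly, the fallback is the following. Show that $K$ is finite, since it is contained in the finite kernel of a quasi-inverse composite. Then $K$ is representable by a finite kfl scheme whose fibers at all points are zero, so $K=0$ by Nakayama's lemma. For the cokernel, surjectivity can be checked on strict local rings via GAGF and Artin approximation, following the pattern of the proof of Proposition \ref{dual1}: successive truncations $R/m^{n}$ have constant log structure, so $f$ is bijective on their points, and these points glue to the complete and then to the henselian ring.
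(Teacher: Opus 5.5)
Your main argument is correct, but it proves $G\cong G'$ by a different route than the paper. Both proofs treat the quotient $A/G\to A'/G'$ the same way. The induced maps on $\bar X$ and $\bar Y$ are isomorphisms because they are so on stalks. The map $A/G\to A'/G'$ is then the map of the sheaves $\cHom(\bar X,\Gmlog/\Gm)^{(\bar Y)}/\bar Y$ induced by them. The paper also takes the functoriality you flag as the main obstacle for granted; it simply asserts that $f$ induces $G\to G'$ and $A/G\to A'/G'$.

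The two proofs differ on $G\to G'$:
\begin{itemize}
\item \textbf{Paper.} It pulls back a complete wide fan $\Sigma'\subset A'/G'$ along $A/G\cong A'/G'$. This gives a morphism of proper models $P\to P'$, which is an isomorphism because it is one on all constant-log loci and $P,P'$ are proper. It then recovers $G\cong G'$ as the pullbacks of the unit sections of $P/G$ and $P'/G'$.
\item \textbf{Yours.} You apply the fibral isomorphism criterion (EGA IV 17.9.5) directly. $G$ is smooth, hence flat and locally of finite presentation. Each $G_s\to G'_s$ is an isomorphism, because the semiabelian part is compatible with pullback to the fs log point $s$, and $f$ is an isomorphism there.
\end{itemize}
Your route is valid and avoids the proper-model machinery entirely. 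It also uses the hypothesis only at points of $S$. What the paper's route gives in addition is that $f$ induces an isomorphism of the proper models themselves.

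Two side remarks:
\begin{itemize}
\item Your preliminary reduction to $S$ of finite type over $\Z$ is unnecessary. The fibral criterion needs no noetherian hypothesis, and it is not clear that the hypothesis on constant-log loci descends to a finite-type model.
\item Your ``Alternative'' paragraph is not rigorous as written: no quasi-inverse is available a priori to bound $\ker f$.
\end{itemize}
The main argument depends on neither, so they can simply be dropped.
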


\begin{pf}
  Let $G$ and $G'$ be the semiabelian parts of $A$ and $A'$, respectively. 
  The given homomorphism $f$ induces $G \to G'$ and $A/G \to A'/G'$. 
  By assumption, other induced homomorphisms 
$\overline Y \to \overline Y'$ 
and 
$\overline X' \to \overline X$ 
are isomorphisms (under the usual notation). 
  Since $A/G \to A'/G'$ is isomorphic to 
$\cHom(\overline X,\Gmlog/\Gm)^{(\overline Y)}/\overline Y
\to \cHom(\overline X',\Gmlog/\Gm)^{(\overline Y')}/\overline Y'$, 
it is also an isomorphism. 
  Because $A'$ is a log abelian variety, by \cite{KKN5} Section 10, there is a 
complete and wide fan $\Sig'$, which is regarded as a subsheaf of $A'/G'$. 
  By the pullback with respect to the isomorphism 
$A/G \to A'/G'$, we have a subsheaf $\Sig$ of $A/G$. 
Then \cite{KKN6} Proposition 4.7 implies that $f$ induces 
a morphism of proper models 
$(P,\Sig,\ldots)$ to $(P',\Sig',\ldots)$. 
  By assumption, the morphism $P \to P'$ is an isomorphism on all constant log loci.
  Since $P$ and $P'$ are proper, $P \to P'$ is an isomorphism.
  This implies that $G \to G'$ is also an isomorphism because $G$ and $G'$ 
are the pullbacks of the unit section of $P/G$ and $P'/G'$, respectively. 
  Therefore, the original $f$ is an isomorphism. 
\end{pf}

\noindent Takeshi Kajiwara

\noindent Department of Applied mathematics \\
Faculty of Engineering \\
Yokohama National University \\
Hodogaya-ku, Yokohama 240-8501 \\
Japan

\noindent kajiwara@ynu.ac.jp
\par\bigskip\par

\noindent Kazuya Kato

\noindent 
Department of Mathematics
\\
University of Chicago
\\
5734 S.\ University Avenue
\\
Chicago, Illinois, 60637 \\
USA

\noindent kkato@math.uchicago.edu
\par\bigskip\par

\noindent Chikara Nakayama

\noindent Department of Economics \\ Hitotsubashi University \\
2-1 Naka, Kunitachi, Tokyo 186-8601 \\ Japan

\noindent c.nakayama@r.hit-u.ac.jp
\end{document}